\definecolor{aleacolor}{rgb}{0.16,0.59,0.78}
\renewcommand{\cite}{\citet}
\theoremstyle{plain}
\newtheorem{theorem}{Theorem}[section]                                          
\newtheorem{proposition}[theorem]{Proposition}                          
\newtheorem{lemma}[theorem]{Lemma}
\theoremstyle{definition}
\newtheorem{definition}[theorem]{Definition}
\theoremstyle{remark}
\newtheorem{remark}[theorem]{Remark}
\makeatletter \@addtoreset{equation}{section} \makeatother
\newcommand{\aleaIndex}[1]{\href{http://alea.impa.br/english/index_v#1.htm}{\bf #1}}
\newcommand{\dd}{\mathrm{d}}
\newcommand{\ee}{\mathrm{e}}
\newcommand{\R}{\mathbb{R}}
\newcommand{\Exp}{\mathbb{E}}
\newcommand{\Var}{\mathrm{Var}}
\renewcommand{\Pr}{\mathbb{P}}
\newcommand{\ind}[1]{\mathds{1}_{\{#1\}}}
\newcommand{\petito}{\mathrm{o}}
\renewcommand{\bar}{\overline}
\renewcommand{\tilde}{\widetilde}
\newcommand{\cE}{\mathsf{E}}
\newcommand{\cD}{\mathsf{D}}
\newcommand{\CD}{\mathscr{C}(\cD)}
\newcommand{\PD}{\mathscr{P}(\cD)}
\newcommand{\PnD}{\mathscr{P}^n(\cD)}
\newcommand{\MD}{\mathscr{M}(\cD)}
\newcommand{\MnD}{\mathscr{M}^n_0(\cD)}
\newcommand{\MzD}{\mathscr{M}_0(\cD)}
\newcommand{\CzD}{\mathscr{C}_0(\cD)}
\newcommand{\genLn}{\mathbf{L}^n}
\newcommand{\genMn}{\mathbf{M}^n}
\newcommand{\genMnm}{\mathbf{M}^{n_m}}
\newcommand{\bgenM}{\bar{\mathbf{M}}}
\renewcommand{\Re}{\mathrm{Re}\,}
\begin{document}

\title{Central Limit Theorem for stationary Fleming--Viot particle systems in finite spaces}
\author{Tony Leli\`evre}
\address{Universit\'e Paris-Est, CERMICS (ENPC), INRIA\newline 77455 Marne-la-Vall\'ee, France}
\email{lelievre@cermics.enpc.fr}
\author{Loucas Pillaud-Vivien}
\address{INRIA - D\'epartement d'informatique de l'ENS, CNRS/INRIA/PSL Research University\newline 75005 Paris, France}
\email{loucas.pillaud-vivien@inria.fr}
\author{Julien Reygner}
\address{Universit\'e Paris-Est, CERMICS (ENPC)\newline 77455 Marne-la-Vall\'ee, France}
\email{julien.reygner@enpc.fr}

\thanks{This work is partially supported by the European Research Council under the European Union's Seventh Framework Programme (FP/2007-2013) / ERC Grant Agreement number 614492 and the French National Research Agency (ANR) under the program ANR-17-CE40-0030 - EFI - Entropy, flows, inequalities.}

\keywords{Central Limit Theorem, Fleming--Viot particle system, stationary distribution.}
\subjclass[2010]{60F05, 60J27}

%
%

\begin{abstract}
  We consider the Fleming--Viot particle system associated with a continuous-time Markov chain in a finite space. Assuming  irreducibility, it is known that the particle system possesses a unique stationary distribution, under which its empirical measure converges to the quasistationary distribution of the Markov chain. We complement this Law of Large Numbers with a Central Limit Theorem. Our proof essentially relies on elementary computations on the infinitesimal generator of the Fleming--Viot particle system, and involves the so-called $\pi$-return process in the expression of the asymptotic variance. Our work can be seen as an infinite-time version, in the setting of finite space Markov chains, of results by Del Moral and Miclo [ESAIM: Probab. Statist., 2003] and C\'erou, Delyon, Guyader and Rousset [arXiv:1611.00515, arXiv:1709.06771].
\end{abstract}

\maketitle

\section{Introduction}

\subsection{Quasistationary distribution} Let $\cE$ be a finite set, and $P$ be a stochastic matrix on $\cE$ with coefficients $p(x,y)$, $x,y \in \cE$. Denote by $(\mathrm{x}_t)_{t \geq 0}$ the continuous-time Markov chain in $\cE$ with infinitesimal generator
\begin{equation}\label{eq:L}
  \forall x \in \cE, \quad \forall f : \cE \to \R, \qquad Lf(x) := \sum_{y \in \cE} p(x,y)[f(y)-f(x)].
\end{equation}
For any nonempty subset $\cD$ of $\cE$, define the random time $\tau_\cD$ by
\begin{equation*}
  \tau_\cD := \inf\{t \geq 0: \mathrm{x}_t \not\in \cD\}.
\end{equation*}
A probability measure $\pi$ on $\cD$ is called a \emph{quasistationary distribution} (QSD) for $(\mathrm{x}_t)_{t \geq 0}$ in $\cD$ if for any $t \geq 0$,
\begin{equation*}
  \Pr_\pi(\mathrm{x}_t \in \cdot | \tau_\cD > t) = \pi(\cdot).
\end{equation*}
Here and throughout the paper, we use the notation $\Pr_\mu$, $\Exp_\mu$ (respectively $\Pr_x$, $\Exp_x$) to indicate that $\mathrm{x}_0$ is distributed according to the probability measure $\mu$ (respectively equal to $x$ almost surely).

As soon as the restriction of the matrix $P$ to the set $\cD$ is irreducible, there exists a unique QSD in $\cD$, which possesses a natural spectral characterisation, recalled in Proposition~\ref{prop:QSD} below. Furthermore,~\cite{DarSen67} proved that the QSD is the so-called \emph{Yaglom limit}
\begin{equation*}
  \lim_{t \to +\infty} \Pr_\mu(\mathrm{x}_t \in \cdot | \tau_\cD > t) = \pi(\cdot),
\end{equation*}
for \emph{any} initial distribution $\mu$ in the set of probability measures on $\cD$. We refer to~\cite{ColMarSan13} for a standard reference on QSDs. 

\subsection{Fleming--Viot particle system}\label{ss:intro:FV} Consider $n$ particles evolving independently in $\cE$ according to the infinitesimal generator $L$, with the supplementary condition that as soon as one particle attempts to jump to a point $y \in \cE \setminus \cD$, it is instantly moved to the location of one the $n-1$ remaining particles, picked uniformly at random. Clearly, this dynamics induces a Markov chain $(\mathrm{x}^1_t, \ldots, \mathrm{x}^n_t)_{t \geq 0}$ in $\cD^n$, which is called the \emph{Fleming--Viot particle system}\footnote{The terminology \emph{Fleming--Viot particle system} for such processes, with a finite number of particles and space-inhomogeneous exit rate, was popularised by~\cite{BurHolIngMar96,BurHolMar00} --- the second reference specifically addressing the case of $d$-dimensional Brownian motions. A discussion of the differences between the evolution of the empirical measure of this $n$-particle model and the original Fleming--Viot superprocess~\cite[Chapter~1]{Eth00} can be found in the introduction of~\cite{GriKan04}.} (\cite{AssFerGro11}).

Assume that the particles are initially iid according to some probability measure $\mu$ on $\cD$. \cite{AssFerGro11} proved that, for any $t \geq 0$, the empirical distribution $\upeta^n_t$ of $(\mathrm{x}^1_t, \ldots, \mathrm{x}^n_t)$, defined by
\begin{equation*}
  \forall x \in \cD, \qquad \upeta^n_t(x) := \frac{1}{n}\sum_{i=1}^n \ind{\mathrm{x}^i_t=x},
\end{equation*}
converges, when $n \to +\infty$, to the probability measure $\Pr_\mu(\mathrm{x}_t \in \cdot | \tau_\cD > t)$.

On the other hand, the process $(\mathrm{x}^1_t, \ldots, \mathrm{x}^n_t)_{t \geq 0}$ is irreducible in $\cD^n$, and therefore it possesses a unique stationary distribution. Let $(\mathrm{x}^1_\infty, \ldots, \mathrm{x}^n_\infty)$ be a random vector in $\cD^n$ distributed according to this stationary distribution, and consider the random probability measure $\upeta^n_\infty$ defined on $\cD$ by
\begin{equation*}
  \forall x \in \cD, \qquad \upeta^n_\infty(x) := \frac{1}{n}\sum_{i=1}^n \ind{\mathrm{x}^i_\infty=x}.
\end{equation*}
It was also proved by~\cite{AssFerGro11} that $\upeta^n_\infty$ converges, when $n \to +\infty$, to the QSD $\pi$. We refer to~\cite{BurHolMar00,DelMic03,GriKan04,Rou06,FerMar07,Lob09,Vil14,CloTha16:SPA,OcaVil17} for general approximation results for either the conditional distribution $\Pr_\mu(\mathrm{x}_t \in \cdot | \tau_\cD > t)$ or the QSD $\pi$ by Fleming--Viot-like particle systems, in various contexts. 

In particular, the convergence of $\upeta^n_\infty$ to $\pi$ provides an effective numerical procedure to approximate the QSD, by simulating the Fleming--Viot particle system over sufficiently long time intervals, and with a sufficiently large number of particles (see~\cite{AldFlaPal88,GroJon12,BenClo15,BenCloPan16} for some alternative methods). To assess the quality of this approximation, it is necessary to obtain error estimates, both on the time needed for the Fleming--Viot particle system to reach its equilibrium, and on the rate of convergence of $\upeta^n_\infty$ to $\pi$. In this article, we address the latter question by proving a Central Limit Theorem for $\upeta^n_\infty$, stated in Theorem~\ref{theo:CLT}.

\section{Notation and main results}

\subsection{Functions and measures} We denote by $\CD$ the set of \emph{functions} $\cD \to \R$ and by $\MD$ the set of finite signed \emph{measures} on $\cD$ (both sets can be identified with $\R^\cD$, but we introduce distinct notations to make our statements more clear). The duality bracket between $\MD$ and $\CD$ is written
\begin{equation*}
  \forall \rho \in \MD, \quad \forall f \in \CD, \qquad \langle \rho, f\rangle := \sum_{x \in \cD} \rho(x)f(x).
\end{equation*}

Let $\mathscr{H}$ and $\mathscr{K}$ be subspaces of $\CD$ or $\MD$, with respective dual spaces $\mathscr{H}'$ and $\mathscr{K}'$ for the bracket $\langle\cdot,\cdot\rangle$. The adjoint of an operator $R : \mathscr{H} \to \mathscr{K}$ is denoted by $R^*:\mathscr{K}' \to \mathscr{H}'$. If $\mathscr{H} \subset \CD$ and $\mathscr{K} \subset \MD$ (or alternatively $\mathscr{H} \subset \MD$ and $\mathscr{K} \subset \CD$) are each other's dual, the operator $R$ is called \emph{symmetric} if it coincides with $R^*$.

The identity of a space $\mathscr{H}$ is denoted by $I_{\mathscr{H}}$.

We denote by $\PD$ the set of probability measures on $\cD$, namely the set of measures $\eta \in \MD$ such that $\eta(x) \geq 0$ for all $x \in \cD$ and $\langle \eta, \mathbf{1}\rangle = 1$, where $\mathbf{1} \in \CD$ is defined by $\mathbf{1}(x):=1$ for all $x \in \cD$.

\subsection{Quasistationary distribution} Let us write $p_\cD(x,y) := p(x,y)$, $x,y \in \cD$, and define $P_\cD : \CD \to \CD$ by
\begin{equation*}
  \forall f \in \CD, \quad \forall x \in \cD, \qquad P_\cD f(x) := \sum_{y \in \cD} p_\cD(x,y)f(y),
\end{equation*}
and $q \in \CD$ by
\begin{equation}\label{eq:q}
  \forall x \in \cD, \qquad q(x) := \sum_{y \in \cE \setminus \cD} p(x,y) = 1 - \sum_{y \in \cD} p_\cD(x,y).
\end{equation}

Throughout the article, we work under the following assumption, which we shall not recall in the statement of our results.
\begin{enumerate}[label=(Irr),ref=Irr]
  \item\label{ass:Irr} The substochastic matrix $P_\cD$ is irreducible on the finite space $\cD$.
\end{enumerate}

The following result is a consequence of the Perron--Frobenius Theorem~(\cite[Theorem~1.5, p.~22]{Sen06}), and its proof is omitted. We recall that $(\mathrm{x}_t)_{t \geq 0}$ is the continuous-time Markov chain with infinitesimal generator $L$ introduced in~\eqref{eq:L}.
\begin{proposition}[Existence and uniqueness of the QSD]\label{prop:QSD}
  The following properties hold.
  \begin{enumerate}[label=(\roman*),ref=\roman*]
    \item\label{it:QSD:1} There exists $\lambda \in [0,1)$ such that $1-\lambda$ is the spectral radius of $P_\cD$.
    \item\label{it:QSD:5} The eigenvalue $1-\lambda$ of $P_\cD$ is simple, and any other eigenvalue $\sigma$ satisfies $\Re \sigma < 1-\lambda$.
    \item\label{it:QSD:2} There is a unique $\pi \in \PD$ such that $P_\cD^* \pi = (1-\lambda)\pi$.
    \item\label{it:QSD:2.5} For all $x \in \cD$, $\pi(x)>0$.
    \item\label{it:QSD:3} $\pi$ is the unique QSD of $(\mathrm{x}_t)_{t \geq 0}$ in $\cD$.
    \item\label{it:QSD:4} $\pi$ and $q$ satisfy the relation $\langle \pi, q\rangle = \lambda$.
  \end{enumerate}
\end{proposition}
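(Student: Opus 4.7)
The plan is to apply the Perron–Frobenius theorem to the nonnegative irreducible matrix $P_\cD$ to get (\ref{it:QSD:1})–(\ref{it:QSD:2.5}), then translate the spectral information on $P_\cD$ into probabilistic information on the sub-Markovian semigroup in order to obtain (\ref{it:QSD:3}), and finally derive (\ref{it:QSD:4}) by pairing the eigenvector identity with the constant function $\mathbf{1}$.

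Concretely, since $P_\cD$ is a substochastic matrix on the finite set $\cD$, its spectral radius $r$ lies in $[0,1]$, and irreducibility forces $r > 0$. Writing $r = 1-\lambda$ immediately yields (\ref{it:QSD:1}) (with $\lambda < 1$). Perron–Frobenius for irreducible nonnegative matrices then asserts that $r$ is a simple eigenvalue with a one-dimensional eigenspace spanned by a strictly positive vector, both on the right and on the left; normalising the left eigenvector to a probability measure produces the unique $\pi \in \PD$ of (\ref{it:QSD:2}), with $\pi(x) > 0$ for all $x$, giving (\ref{it:QSD:2.5}). For (\ref{it:QSD:5}), one checks the strict inequality $\Re \sigma < 1-\lambda$ for any other eigenvalue $\sigma$: if $|\sigma| < r$, this is trivial, and if $|\sigma| = r$ the general theory shows that $\sigma = r\ee^{2\pi \ii k/h}$ with $k \in \{1,\dots,h-1\}$ (where $h$ is the period of $P_\cD$), whence $\Re \sigma = r\cos(2\pi k/h) < r$.

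For (\ref{it:QSD:3}), introduce the restriction $L_\cD := P_\cD - I_{\CD}$ of $L$ to $\cD$, so that the sub-Markovian semigroup $P_t^\cD f(x) = \Exp_x[f(\mathrm{x}_t)\ind{\tau_\cD > t}]$ coincides with $\ee^{t L_\cD}$. The identity $P_\cD^* \pi = (1-\lambda)\pi$ then translates into $(P_t^\cD)^* \pi = \ee^{-\lambda t}\pi$, which upon pairing with $\mathbf{1}$ gives $\Pr_\pi(\tau_\cD > t) = \ee^{-\lambda t}$ and shows that $\pi$ is indeed a QSD. Conversely, if $\mu \in \PD$ is any QSD, then $(P_t^\cD)^* \mu = c(t)\mu$ for $c(t) := \Pr_\mu(\tau_\cD > t)$; the semigroup property and continuity of $c$ yield $c(t) = \ee^{-\lambda' t}$ for some $\lambda' \geq 0$, so that $\mu$ is a nonnegative left eigenvector of $P_\cD$ with eigenvalue $1-\lambda'$. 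Since $\mu$ is nonnegative and nonzero, Perron–Frobenius forces $1-\lambda' = 1-\lambda$ and $\mu = \pi$, establishing uniqueness. Finally, for (\ref{it:QSD:4}), we sum the relation $(1-\lambda)\pi(y) = \sum_{x \in \cD}\pi(x) p_\cD(x,y)$ over $y \in \cD$: the left-hand side equals $1-\lambda$, the right-hand side equals $\sum_{x}\pi(x)(1-q(x)) = 1 - \langle \pi,q\rangle$, so $\langle \pi, q\rangle = \lambda$.

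The only delicate point is (\ref{it:QSD:5}), since standard Perron–Frobenius gives only $|\sigma|\leq r$ for non-leading eigenvalues; the strict bound on the real part must be extracted from the classification of peripheral eigenvalues of irreducible nonnegative matrices. Everything else reduces to invoking Perron–Frobenius and converting left-eigenvector identities for $P_\cD$ into statements about the killed semigroup.
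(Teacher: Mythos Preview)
Your proof is correct and follows exactly the Perron--Frobenius route that the paper indicates; in fact the paper omits the proof entirely, simply stating that the proposition is a consequence of the Perron--Frobenius Theorem and citing \cite[Theorem~1.5, p.~22]{Sen06}, so your argument supplies precisely the details the paper leaves to the reader.
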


\subsection{Fleming--Viot particle system} For all $n \geq 1$, let $\PnD$ be the subset of probability measures $\eta \in \PD$ with atoms of mass proportional to $1/n$; namely such that for all $x \in \cD$, there exists $k(x) \in \{0, \ldots, n\}$ such that $\eta(x)=k(x)/n$. Of course, $\sum_{x \in \cD}k(x)=n$. 

For all $x,y \in \cD$, let us define the measure $\theta^{x,y} \in \MD$ by 
\begin{equation}\label{eq:theta}
  \forall z \in \cD, \qquad \theta^{x,y}(z) := \ind{y=z}-\ind{x=z}.
\end{equation}

\begin{definition}[Empirical distribution of the Fleming--Viot particle system]\label{defi:FV}
  Let $n \geq 2$. The \emph{empirical distribution of the Fleming--Viot particle system} with $n$ particles is the continuous-time Markov chain $(\upeta^n_t)_{t \geq 0}$ in $\PnD$ with infinitesimal generator
  \begin{equation*}
    \genLn \phi(\eta) := \sum_{x,y \in \cD} n \eta(x)\left(p_\cD(x,y) + q(x) \frac{n\eta(y)}{n-1}\right)\left[\phi\left(\eta+\frac{\theta^{x,y}}{n}\right)-\phi(\eta)\right],
  \end{equation*}
  for any function $\phi : \PnD \to \R$.
\end{definition}
That the measure-valued process $(\upeta^n_t)_{t \geq 0}$ actually describes the evolution of the empirical distribution of the particle system introduced in Subsection~\ref{ss:intro:FV} is immediate. Indeed, in the configuration $\eta \in \PnD$, there are $n\eta(x)$ particles located at $x \in \cD$, each of which can jump `directly' to $y \in \cD$ at rate $p_\cD(x,y)$, or try to exit $\cD$ at rate $q(x)$ and be moved to $y$ with probability $n\eta(y)$ (the number of particles located at $y$) divided by $n-1$ (the total number of remaining particles).

\subsection{Statement of the main result} By Assumption~\eqref{ass:Irr}, the process $(\upeta^n_t)_{t \geq 0}$ introduced in Definition~\ref{defi:FV} is irreducible in the finite space $\PnD$, therefore it possesses a unique stationary distribution. Let $\upeta^n_\infty$ be a random variable in $\PD$ distributed according to this stationary distribution. The following Law of Large Numbers was obtained by~\cite[Theorem~2]{AssFerGro11}.

\begin{proposition}[LLN for $\upeta^n_\infty$]\label{prop:LLN}
  Let $\|\cdot\|$ be any norm on $\MD$. Then
  \begin{equation*}
    \lim_{n \to +\infty} \Exp[\|\upeta^n_\infty-\pi\|] = 0.
  \end{equation*} 
\end{proposition}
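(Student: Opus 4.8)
The plan is to combine a compactness argument with a soft identification of the subsequential limits. Since $\MD$ is finite-dimensional, all norms on it are equivalent, so it suffices to treat one fixed norm; and since $\upeta^n_\infty$ takes values in the compact convex set $\PD$, the assertion $\Exp[\|\upeta^n_\infty-\pi\|]\to 0$ is equivalent to the convergence in distribution of $\upeta^n_\infty$ towards the constant $\pi$. The laws of $\upeta^n_\infty$, $n\ge 2$, form a tight family on the compact space $\PD$, so it is enough to show that every subsequential weak limit $\Upsilon$ of this family equals the Dirac mass $\delta_\pi$.

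The key computation concerns the action of $\genLn$ on smooth observables. For $\phi$ of class $C^2$ on a neighbourhood of $\PD$ in $\MD$, a second-order Taylor expansion of the increments $\phi(\eta+\theta^{x,y}/n)-\phi(\eta)$ in Definition~\ref{defi:FV}, together with the elementary identities $\sum_{y\in\cD}p_\cD(x,y)=1-q(x)$ and $\langle\eta,\mathbf{1}\rangle=1$, yields
\begin{equation*}
  \genLn\phi(\eta)=\langle\Phi(\eta),D\phi(\eta)\rangle+\grandO(1/n),\qquad \Phi(\eta):=P_\cD^*\eta-\eta+\langle\eta,q\rangle\eta,
\end{equation*}
uniformly in $\eta\in\PD$, where $D\phi(\eta)\in\CD$ denotes the differential of $\phi$ at $\eta$ (the linear case $\phi=\langle\cdot,f\rangle$ is the prototype and needs no Taylor expansion). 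Write $\mathcal{A}\phi:=\langle\Phi(\cdot),D\phi(\cdot)\rangle$ for the generator of the deterministic flow $\dot\eta_t=\Phi(\eta_t)$. Stationarity gives $\Exp[\genLn\phi(\upeta^n_\infty)]=0$ for all $n$, and letting $n\to+\infty$ along a subsequence with $\mathrm{Law}(\upeta^n_\infty)\Rightarrow\Upsilon$ — using that $\mathcal{A}\phi$ is bounded and continuous on $\PD$ and that the remainder is uniform — yields $\int_{\PD}\mathcal{A}\phi\,\dd\Upsilon=0$ for every such $\phi$. Since $\Phi$ is polynomial, its flow $(T_t)_{t\ge0}$ is smooth in the initial datum, and a chain-rule computation based on the group property in the form $DT_t(\eta)\Phi(\eta)=\Phi(T_t\eta)$ shows that $\frac{\dd}{\dd t}\int_{\PD}\phi\circ T_t\,\dd\Upsilon=\int_{\PD}\mathcal{A}(\phi\circ T_t)\,\dd\Upsilon=0$; consequently $(T_t)_*\Upsilon=\Upsilon$ for all $t\ge0$, that is, $\Upsilon$ is invariant under the flow of $\Phi$.

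It then remains to identify that flow. The normalisation $\eta_t:=\ee^{t(P_\cD^*-I)}\eta_0\big/\langle\ee^{t(P_\cD^*-I)}\eta_0,\mathbf{1}\rangle$ solves $\dot\eta_t=\Phi(\eta_t)$ and coincides with the conditional law $\Pr_{\eta_0}(\mathrm{x}_t\in\cdot\mid\tau_\cD>t)$, which belongs to $\PD$ under Assumption~\eqref{ass:Irr}; in particular $\PD$ is invariant under the flow. The Yaglom limit recalled in the introduction — equivalently, Perron--Frobenius applied to $P_\cD^*-I$, whose leading eigenpair is $(-\lambda,\pi)$ by Proposition~\ref{prop:QSD} — then shows $T_t\eta_0\to\pi$ as $t\to+\infty$ for every $\eta_0\in\PD$, while $\pi$ is the only fixed point of the flow in $\PD$, because a solution of $\Phi(\eta)=0$ with $\eta\in\PD$ is a nonnegative eigenvector of $P_\cD^*$ and hence equals $\pi$. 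Therefore, for any bounded continuous $F:\PD\to\R$, invariance gives $\int_{\PD}F\,\dd\Upsilon=\int_{\PD}F(T_t\eta)\,\Upsilon(\dd\eta)$, whose right-hand side tends to $F(\pi)$ as $t\to+\infty$ by dominated convergence; hence $\Upsilon=\delta_\pi$. This establishes $\upeta^n_\infty\to\pi$ in distribution, and boundedness of $\upeta^n_\infty$ upgrades this to $\Exp[\|\upeta^n_\infty-\pi\|]\to0$.

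The step I expect to be the genuine obstacle is the passage to the limit in the presence of the quadratic term $\langle\eta,q\rangle\eta$ of $\Phi$: because of it one cannot close a self-contained equation on the mean $\Exp[\upeta^n_\infty]$ without controlling the covariance of $\upeta^n_\infty$ — precisely the subject of the Central Limit Theorem, not available here — which is why the argument must instead characterise $\Upsilon$ via flow-invariance and then invoke the convergence of the conditioned dynamics to $\pi$.
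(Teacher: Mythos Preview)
The paper does not give its own proof of this proposition; it attributes it to \cite[Theorem~2]{AssFerGro11}, and the surrounding discussion (see the paragraph containing~\eqref{eq:varestim}) indicates that the key ingredient there is a finite-time correlation/propagation-of-chaos estimate for the particle system, followed by a passage to the stationary regime.

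Your argument is correct and takes a genuinely different, softer route. You extract subsequential limits $\Upsilon$ of the law of $\upeta^n_\infty$ by compactness of $\PD$, expand $\genLn\phi=\mathcal{A}\phi+\grandO(1/n)$ with $\mathcal{A}$ the generator of the deterministic flow $\dot\eta=\Phi(\eta)$, and use stationarity to get $\int\mathcal{A}\phi\,\dd\Upsilon=0$ for all $C^2$ test functions, whence $(T_t)_*\Upsilon=\Upsilon$ via the flow identity $DT_t(\eta)\Phi(\eta)=\Phi(T_t\eta)$. The identification $T_t\eta_0=\Pr_{\eta_0}(\mathrm{x}_t\in\cdot\mid\tau_\cD>t)$ together with the Yaglom limit then forces $\Upsilon=\delta_\pi$. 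What you give up relative to the cited approach is any quantitative information (no rate, no variance bound of the type~\eqref{eq:varestim}); what you gain is a self-contained argument requiring no finite-time estimate at all. Your method is also the exact zeroth-order analogue of the paper's own strategy for the CLT in Section~\ref{s:gauss}: there too one proves convergence of generators ($\genMn\to\bgenM$, Lemma~\ref{lem:cvMn}) and identifies the limiting law as the unique stationary distribution of the limiting dynamics --- in your case that limiting dynamics is the deterministic conditional flow rather than a linear diffusion.
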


Our main result is a Central Limit Theorem complementing Proposition~\ref{prop:LLN}. For any $f \in \CD$, we denote
\begin{equation*}
  \Var_\pi(f) := \sum_{x \in \cD} \pi(x)\left(f(x)-\langle \pi, f\rangle\right)^2.
\end{equation*}
For all $t \geq 0$ and $f \in \CD$, we also define
\begin{equation}\label{eq:Qt}
  \forall x \in \cD, \qquad Q_tf(x) := \Exp_x\left[f(\mathrm{x}_t)\ind{\tau_\cD > t}\right] = \ee^{t(P_\cD-I_{\CD})}f(x).
\end{equation}

\begin{theorem}[CLT for $\upeta^n_\infty$]\label{theo:CLT}
  The \emph{fluctuation field} $\sqrt{n}(\upeta^n_\infty-\pi)$ converges in distribution, in $\MD$, to a centered Gaussian random variable with covariance operator $K : \CD \to \MD$ defined by
  \begin{equation*}
    \langle Kf, f\rangle := \Var_\pi(f) + 2\lambda \int_{s=0}^{+\infty} \ee^{2\lambda s} \Var_\pi\left(Q_s f\right) \dd s,
  \end{equation*}
  for any $f \in \CD$ such that $\langle \pi, f \rangle=0$.
\end{theorem}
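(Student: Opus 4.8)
The natural strategy is to exploit the stationarity of $\upeta^n_\infty$ together with a careful analysis of the infinitesimal generator $\genLn$ acting on well-chosen test functions. The key observation is that a CLT for stationary Markov chains usually follows from a martingale decomposition: writing $M^n_t := \phi(\upeta^n_t) - \phi(\upeta^n_0) - \int_0^t \genLn\phi(\upeta^n_s)\,\dd s$ for the Dynkin martingale associated with a function $\phi$, one seeks $\phi = \phi^n_f$ (depending on $f \in \CD$ with $\langle\pi,f\rangle=0$) such that $\genLn\phi^n_f(\eta) \approx \langle \eta - \pi, f\rangle$ up to lower-order terms, i.e. one solves (approximately) a Poisson equation for $\genLn$. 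Then in stationarity $\Exp[\langle \upeta^n_\infty - \pi, f\rangle] = 0$ forces a telescoping relation, and the limiting variance of $\sqrt n \langle \upeta^n_\infty - \pi, f\rangle$ is read off from the bracket of the limiting martingale, i.e. from $\lim_n n\, \Exp[\genLn((\phi^n_f)^2) - 2\phi^n_f \genLn\phi^n_f](\upeta^n_\infty)$, the ``carré du champ'' $\Gamma^n(\phi^n_f,\phi^n_f)$.

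Concretely, I would first expand $\genLn\phi(\eta)$ for $\phi(\eta) = \langle \eta, g\rangle$ and $\phi(\eta) = \langle\eta,g\rangle\langle\eta,h\rangle$ using the definition of $\theta^{x,y}$; the linear case gives, after using $\sum_y p_\cD(x,y) = 1 - q(x)$ and $\sum_y \eta(y) = 1$,
\begin{equation*}
  \genLn\langle\cdot,g\rangle(\eta) = \langle \eta, (P_\cD - I_{\CD})g\rangle + \langle\eta,q\rangle\langle\eta,g\rangle - \frac{1}{n-1}\langle\eta, q g\rangle + \frac{1}{n-1}\langle\eta,q\rangle\langle\eta,g\rangle,
\end{equation*}
or something close to this — the precise bookkeeping of the $1/(n-1)$ corrections is routine but must be done carefully. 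The quadratic term $\langle\eta,q\rangle\langle\eta,g\rangle$ is the source of the nonlinearity and is exactly what produces the factor $\lambda = \langle\pi,q\rangle$ and the $\ee^{2\lambda s}$ weight in the asymptotic variance. I would then look for $\phi^n_f$ of the form $\phi^n_f(\eta) = \langle\eta, g_f\rangle + (\text{quadratic correction})$, where $g_f$ solves the deterministic (linearised, around $\pi$) Poisson equation $(P_\cD - I_{\CD})g_f + \langle\pi,q\rangle g_f - q\langle\pi,g_f\rangle + \dots = -f$ on the hyperplane $\{\langle\pi,\cdot\rangle = 0\}$; by Proposition~\ref{prop:QSD}\eqref{it:QSD:5} the relevant operator is invertible on this hyperplane, since $1-\lambda$ is a simple eigenvalue. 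The identity $Q_s = \ee^{s(P_\cD - I_{\CD})}$ together with $P_\cD^*\pi = (1-\lambda)\pi$ lets one write $g_f = \int_0^\infty \ee^{\lambda s} Q_s f\,\dd s$ (convergent because the spectral radius of $\ee^{s(P_\cD-I_{\CD})}$ is $\ee^{-\lambda s}$ and $\lambda$-shifting still leaves a spectral gap on the hyperplane), which is the structure that will ultimately yield the stated integral formula.

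The remaining steps are: (i) compute $\Gamma^n(\phi^n_f,\phi^n_f)(\eta) = \genLn((\phi^n_f)^2)(\eta) - 2\phi^n_f(\eta)\genLn\phi^n_f(\eta)$ and show, using Proposition~\ref{prop:LLN}, that $n\,\Gamma^n(\phi^n_f,\phi^n_f)(\upeta^n_\infty) \to \sigma^2(f)$ in probability (or in $L^1$), where $\sigma^2(f)$ is the claimed $\langle Kf,f\rangle$; (ii) verify that the martingale $M^n$ (on a long time horizon, or via a resolvent/spectral argument in stationarity) satisfies a martingale CLT — Lindeberg-type conditions are automatic since jumps are $\grandO(1/n)$ and the state space is finite, so the jump sizes of $\sqrt n M^n$ are uniformly $\grandO(1/\sqrt n)$; (iii) control the error terms $\sqrt n\,(\genLn\phi^n_f(\upeta^n_\infty) - \langle\upeta^n_\infty - \pi,f\rangle)$ and show they vanish, using that $\upeta^n_\infty \to \pi$ and that the quadratic correction to $\phi^n_f$ contributes only at order $1/n$. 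The main obstacle, I expect, is step (iii) combined with the choice of the quadratic correction: one needs the correction term precisely so that the leftover nonlinear piece $\langle\upeta^n_\infty,q\rangle\langle\upeta^n_\infty,g_f\rangle - \lambda\langle\pi,g_f\rangle - \dots$ is genuinely $\petito(1/\sqrt n)$ in stationarity, and making this rigorous requires either a bootstrap using a crude a priori bound on $\Exp[\|\upeta^n_\infty - \pi\|^2]$ of order $1/n$ (which may itself need to be established first, perhaps by a Lyapunov/generator argument) or a clever direct manipulation of the stationarity identity $\Exp[\genLn\Psi(\upeta^n_\infty)] = 0$ applied to $\Psi = \phi^n_f$ and $\Psi = (\phi^n_f)^2$. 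Once the $1/n$ second-moment bound is in hand, everything else is a matter of organising the expansions and invoking the martingale CLT.
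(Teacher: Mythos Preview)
Your proposal has the right computational ingredients but a genuine conceptual gap in step~(ii). The martingale CLT you invoke is a tool for \emph{time}-asymptotic statements: it says that $\frac{1}{\sqrt{T}}M^n_T$ is asymptotically Gaussian as $T\to\infty$ for \emph{fixed} $n$, with variance $\Exp_\pi[\Gamma^n(\phi^n_f,\phi^n_f)]$. But the theorem is a \emph{model}-asymptotic statement: convergence of the law of $\sqrt{n}(\upeta^n_\infty-\pi)$ as $n\to\infty$. There is no time horizon to send to infinity, and no ergodic average whose fluctuations a martingale CLT would describe. Your Dynkin decomposition on $[0,T]$ relates $\sqrt{n}\,\phi^n_f(\upeta^n_T)-\sqrt{n}\,\phi^n_f(\upeta^n_0)$ to $\sqrt{n}M^n_T$ plus a drift integral; under stationarity both boundary terms have the \emph{same} law, so this identity does not isolate the distribution of either one. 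The Poisson/carr\'e-du-champ machinery you set up \emph{does} correctly identify the limiting covariance (your computation that $n\,\Gamma^n(\phi^n_f,\phi^n_f)(\upeta^n_\infty)\to 2\mathcal{A}^\pi_\cD(g_f)$ is right, and plugging $\Psi=(\phi^n_f)^2$ into $\Exp[\genLn\Psi(\upeta^n_\infty)]=0$ recovers the Lyapunov equation for $K$), but it does not by itself prove Gaussianity of the limit.

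The paper takes a different, purely static route that closes this gap. First, tightness of $\sqrt{n}(\upeta^n_\infty-\pi)$ is obtained by applying $\Exp[\genLn\phi(\upeta^n_\infty)]=0$ to a quadratic $\phi(\eta)=\tfrac12\langle\eta-\pi,R(\eta-\pi)\rangle$ with $R$ built from a Lyapunov equation for the drift operator $B_0=(P^\pi_\cD)^*-(1-\lambda)I$; this yields the weak $O(1/n)$ second-moment bound you anticipated needing. Second, rather than a martingale CLT, the paper shows that the generator $\genMn$ of the rescaled chain $\upxi^n_t:=\sqrt{n}(\upeta^n_t-\pi)$ converges uniformly on smooth compactly supported test functions to the generator $\bgenM$ of a linear (Ornstein--Uhlenbeck--type) diffusion on $\MzD$. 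Then any subsequential limit $\bar\upxi_\infty$ of $\upxi^n_\infty$ satisfies $\Exp[\bgenM\psi(\bar\upxi_\infty)]=0$ for all such $\psi$, hence is stationary for the limiting diffusion; ellipticity of the latter forces uniqueness, and the stationary law is the centered Gaussian with covariance $K$ solving $B_0K+KB_0^*+2A^\pi_\cD=0$. The variance formula then follows by writing $K=2\int_0^\infty \ee^{sB_0}A^\pi_\cD\ee^{sB_0^*}\dd s$ and integrating by parts. Your ``alternative'' at the end---direct manipulation of $\Exp[\genLn\Psi(\upeta^n_\infty)]=0$---is in fact the correct path; the paper simply applies it to \emph{all} smooth compactly supported $\psi$ (after rescaling), not just quadratic ones, and thereby characterises the full limiting law rather than only its second moments.
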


In the course of the proof, we shall provide an equivalent formulation of the covariance operator $K$, which involves the semigroup of the so-called \emph{$\pi$-return process}~(\cite{GroJon12}), see Lemma~\ref{lem:idenVar}. Through a spectral argument on this semigroup, we shall in particular check that the integral in the right-hand side above is finite. 

\begin{remark}[Case $\lambda=0$]\label{rk:lambda0}
  By Proposition~\ref{prop:QSD}, $\lambda=0$ if and only if $q(x)=0$ for all $x \in \cD$, in which case the Markov chain $(\mathrm{x}_t)_{t \geq 0}$ never leaves $\cD$. In this case, $\pi$ is in fact the \emph{stationary} distribution of the chain in $\cD$. This case is formally covered by the results of this article, but it is actually trivial because the Fleming--Viot particle system merely consists in $n$ independent copies of the Markov chain $(\mathrm{x}_t)_{t \geq 0}$, to which the standard Central Limit Theorem immediately applies and yields the covariance operator $K$ characterised by $\langle Kf, f\rangle = \Var_\pi(f)$.
\end{remark}

In a much more general framework,~\cite{DelMic03}, and more recently~\cite{CerDelGuyRou16,CerDelGuyRou17}, established a Central Limit Theorem for $\upeta^n_t$, $t \in [0,+\infty)$. In the particular case of Markov chains with a finite space, and assuming for the sake of simplicity that in the Fleming--Viot particle system, the particles are initially iid according to $\pi$, the corresponding asymptotic variance for $\upeta^n_t$ reads
\begin{equation*}
  \langle K_t f,f\rangle = \Var_\pi(f) + 2\lambda \int_{s=0}^t \ee^{2\lambda s} \Var_\pi\left(Q_s f\right) \dd s,
\end{equation*}
for any $f \in \CD$ such that $\langle \pi, f\rangle=0$ (see~\cite[Proposition~3.7]{DelMic03} and~\cite[Corollary~2.2]{CerDelGuyRou16}). It is immediate to check that in the $t \to +\infty$ limit, one recovers the covariance operator $K$ from Theorem~\ref{theo:CLT}.

Owing to the remark that the Central Limit Theorem for $\eta^n_t$, $t \in [0,+\infty)$, holds for quite a large class of continuous-time Markov processes in general state spaces (see~\cite[Theorem~2.6]{CerDelGuyRou17}), it is natural to ask whether our approach can be generalised to such cases. Although our arguments can heuristically be formulated in an abstract enough setting, we expect their practical implementation to become much more technical, in particular because of the infinite-dimensional nature of $\eta^n_t$. For the sake of clarity, we therefore chose to focus on finite space Markov chains and keep the formalism elementary.  

Proposition~\ref{prop:LLN} naturally raises the question of determining a rate of convergence for $\upeta^n_\infty$ to $\pi$, and Theorem~\ref{theo:CLT} indicates that, at least asymptotically, $\sqrt{n}$ is the correct answer. At the nonasymptotic level, an inequality of the form
\begin{equation}\label{eq:varestim}
  \forall n \geq 2, \qquad \Exp\left[\|\upeta^n_\infty-\pi\|^2\right] \leq \frac{C}{n},
\end{equation}
for some norm $\|\cdot\|$ on $\MD$, may therefore be expected to hold. In the terms of~\cite{FerMar07} and~\cite{AssFerGro11}, such a control essentially amounts to asserting that, under the stationary distribution of the Fleming--Viot particle system, `the correlations between the particles are of order $1/n$'. At finite time $t$, this estimate was proved by~\cite[Proposition~2]{AssFerGro11} and is an important ingredient of the proof of Proposition~\ref{prop:LLN}. However, to the best of our knowledge, the control of the correlations under the stationary distribution is known to hold only under specific mixing conditions~(\cite{FerMar07,CloTha16:SPA}). It also appears in~\cite{Rou06} for the case of diffusions with soft killing. Our proof relies on a weaker form of~\eqref{eq:varestim}, further comments on this aspect are provided in Remark~\ref{rk:var}.

\subsection{Sketch of the proof and outline of the article} In Section~\ref{s:prelim}, we introduce the $\pi$-return process, which describes the trajectorial behaviour of the stationary Fleming--Viot particle system in the $n \to +\infty$ limit, and with which several of the notions that we shall manipulate are related.

The proof of Theorem~\ref{theo:CLT} follows the standard approach of: (i) proving the tightness of the fluctuation field $\sqrt{n}(\upeta^n_\infty-\pi)$; (ii) identifying the law of its limits. The first step is detailed in Section~\ref{s:tight}. It rests on a moment estimate derived from algebraic manipulations on the infinitesimal generator $\genLn$, and the use of an appropriate Lyapunov equation. The second step is detailed in Section~\ref{s:gauss}, where the law of any limit of $\sqrt{n}(\upeta^n_\infty-\pi)$ is shown to be the stationary distribution of a linear diffusion process, for which uniqueness and identification follow from standard arguments. 

Elementary linear algebra results, which are suited to our framework, are collected in Appendix~\ref{app:linalg}.

Apart from the use of Proposition~\ref{prop:LLN} made in the proof of our tightness result, we emphasise that our arguments are entirely static, in the sense that they merely involve estimates on the law of $\upeta^n_\infty$, which stem from elementary manipulations of the infinitesimal generator $\genLn$. At the technical level, we thereby avoid resorting to graphical constructions of the process~(\cite{AssFerGro11,GroJon12}), coupling techniques~(\cite{CloTha16:SPA,CloTha16:ALEA}) or martingale arguments~(\cite{CerDelGuyRou16,CerDelGuyRou17}).

Throughout the article, we take the convention to call `Theorem' and `Lemma' the results which are proper to our arguments, while we call `Proposition' the results which are either proved elsewhere or essentially standard.

\section{The \texorpdfstring{$\pi$}{pi}-return process}\label{s:prelim} 

\subsection{Definition}\label{ss:defipireturn} By exchangeability, the convergence of the empirical distribution $\upeta^n_\infty$ to $\pi$ stated in Proposition~\ref{prop:LLN} is known to be equivalently formulated as a \emph{chaoticity} result~(\cite{Szn91}); namely, under the stationary distribution of the Fleming--Viot particle system, any finite subset of particles asymptotically behaves, when $n \to +\infty$, like independent realisations of $\pi$. \cite{GroJon12} provided a trajectorial description of this chaoticity phenomenon (see also~\cite{GriKan06,Lob09} for related results), based on the following process. 
\begin{definition}[$\pi$-return process]
  The $\pi$-return process is the continuous-time Markov chain $(\mathrm{x}^\pi_t)_{t \geq 0}$ in $\cD$ with jump rates
  \begin{equation*}
    \forall x,y \in \cD, \qquad p_\cD^\pi(x,y) := p_\cD(x,y) + q(x) \pi(y).
  \end{equation*}
\end{definition}
The process $(\mathrm{x}^\pi_t)_{t \geq 0}$ can be simply described as a copy of $(\mathrm{x}_t)_{t \geq 0}$ in which the transitions to points in $\cE \setminus \cD$ are replaced with transitions to points in $\cD$ independently drawn according to $\pi$. By Assumption~\eqref{ass:Irr} and Proposition~\ref{prop:QSD}, the QSD $\pi$ is the unique stationary distribution of $(\mathrm{x}^\pi_t)_{t \geq 0}$. In~\cite[Theorem~2.10]{GroJon12}, it was proved that if the Fleming--Viot particle system is started under its stationary distribution, then when $n \to +\infty$, the marginal evolution of each particle converges in distribution, in the space of sample-paths, to the process $(\mathrm{x}^\pi_t)_{t \geq 0}$ with initial distribution $\pi$.

We denote by $P^\pi_\cD : \CD \to \CD$ the operator defined by
\begin{equation*}
  \forall x \in \cD, \quad \forall f \in \CD, \qquad P^\pi_\cD f(x) := \sum_{y \in \cD} p_\cD^\pi(x,y)f(y),
\end{equation*}
so that
\begin{equation}\label{eq:PpiPD}
  P^\pi_\cD f = P_\cD f + \langle \pi, f\rangle q.
\end{equation}
The infinitesimal generator $L^\pi_\cD : \CD \to \CD$ of the $\pi$-return process writes $L^\pi_\cD = P^\pi_\cD - I_{\CD}$, and the associated semigroup is denoted by
\begin{equation}\label{eq:PtD}
  \forall t \geq 0, \quad \forall x \in \cD, \quad \forall f \in \CD, \qquad P_{t, \cD}^\pi f(x) := \Exp_x[f(\mathrm{x}^\pi_t)] = \ee^{t L^\pi_\cD}f(x).
\end{equation}

Finally, the Dirichlet form of the $\pi$-return process is the quadratic form $\mathcal{A}^\pi_\cD$ on $\CD$ defined by
\begin{equation*}
  \mathcal{A}^\pi_\cD(f) := \frac{1}{2} \sum_{x,y \in \cD} \pi(x)p^\pi_\cD(x,y)\left[f(y)-f(x)\right]^2 = -\sum_{x \in \cD} \pi(x) f(x) L^\pi_\cD f(x).
\end{equation*}

\subsection{Spectral estimates} In this subsection, we describe some spectral properties of the $\pi$-return process. We first introduce the spectral gap of the operator $P_\cD$, which by Proposition~\ref{prop:QSD} is positive.

\begin{definition}[Spectral gap of $P_\cD$]\label{defi:gamma}
  The \emph{spectral gap} of $P_\cD$ is defined by
  \begin{equation*}
    \gamma := 1-\lambda - \max_{\sigma \not= 1-\lambda} \Re \sigma > 0,
  \end{equation*}
  where the max is taken over the eigenvalues $\sigma$ of $P_\cD$.
\end{definition}

Since $\pi$ is the unique stationary distribution of the $\pi$-return process, $1$ is a single eigenvalue for $P^\pi_\cD$, and
\begin{equation}\label{eq:Ppi1}
  P^\pi_\cD\mathbf{1}=\mathbf{1}, \qquad (P^\pi_\cD)^*\pi=\pi.
\end{equation}
As a consequence, we have the direct sum decomposition
\begin{equation}\label{eq:CzD}
  \CD = \R\mathbf{1} \oplus \CzD, \qquad \CzD := \{f \in \CD: \langle\pi,f\rangle=0\},
\end{equation}
where both subspaces $\R\mathbf{1}$ and $\CzD$ are stable by $P^\pi_\cD$. Likewise, 
\begin{equation}\label{eq:MzD}
  \MD = \R\pi \oplus \MzD, \qquad \MzD := \{\xi \in \MD: \langle\xi,\mathbf{1}\rangle=0\},
\end{equation}
and both subspaces $\R\pi$ and $\MzD$ are stable by $(P^\pi_\cD)^*$.

\begin{remark}[Dual spaces of $\MzD$ and $\CzD$]\label{rk:idendual}
  Throughout the article, we identify the spaces $\MzD$ and $\CzD$ as each other's dual.
\end{remark}

We may now state some useful properties of $P^\pi_\cD$. We recall the definition~\eqref{eq:Qt} of the semigroup of operators $(Q_t)_{t \geq 0}$.

\begin{lemma}[Spectral properties of the $\pi$-return process]\label{lem:spectpi}
  The operator $P^\pi_\cD$ possesses the following properties.
  \begin{enumerate}[label=(\roman*),ref=\roman*]
    \item\label{it:spectpi:1} Any complex eigenvalue $\sigma\not=1$ of $P^\pi_\cD$ satisfies $\Re \sigma \leq 1-\lambda-\gamma$.
    \item\label{it:spectpi:2} For any $\delta>0$, for any norm $\|\cdot\|$ on $\CD$, there exists $C_\delta \in [0,+\infty)$ such that, for any $f \in \CzD$,
    \begin{equation*}
      \forall t \geq 0, \qquad \max_{x \in \cD} P^\pi_{t,\cD}f(x) \leq C_\delta \ee^{-t(\lambda+\gamma-\delta)}\|f\|.
    \end{equation*}
    \item\label{it:spectpi:3} For any $f \in \CD$, $x \in \cD$ and $t \geq 0$, $P^\pi_{t,\cD}f(x) = Q_t(f-\langle \pi,f\rangle\mathbf{1})(x) + \langle \pi,f\rangle$.
  \end{enumerate}
\end{lemma}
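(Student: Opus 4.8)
The plan is to treat parts~\eqref{it:spectpi:1} and~\eqref{it:spectpi:2} together, reducing them to the analysis of $P_\cD$ on the hyperplane $\CzD$, and to prove part~\eqref{it:spectpi:3} by a short computation on the generator $L^\pi_\cD=P^\pi_\cD-I_{\CD}$. The observation that drives the first two parts is that, by~\eqref{eq:PpiPD}, $P^\pi_\cD f=P_\cD f$ for every $f\in\CzD$, and that $\CzD$ is stable under $P_\cD$ because $P_\cD^*\pi=(1-\lambda)\pi$; hence, away from the eigenvalue $1$ carried by the invariant line $\R\mathbf{1}$ (recall~\eqref{eq:Ppi1}--\eqref{eq:CzD}), the operator $P^\pi_\cD$ behaves exactly as $P_\cD|_{\CzD}$.

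For part~\eqref{it:spectpi:1}, I would use that $1-\lambda$ is a simple eigenvalue of $P_\cD$ with left eigenvector $\pi$ (Proposition~\ref{prop:QSD}\eqref{it:QSD:5},\eqref{it:QSD:2}), so that $\CzD=\mathrm{Im}(P_\cD-(1-\lambda)I_{\CD})$ is precisely the $P_\cD$-invariant complement of the corresponding (Perron) eigenline. Then $\mathrm{spec}(P^\pi_\cD|_{\CzD})=\mathrm{spec}(P_\cD|_{\CzD})=\mathrm{spec}(P_\cD)\setminus\{1-\lambda\}$, and since $\CD=\R\mathbf{1}\oplus\CzD$ splits $P^\pi_\cD$ into the eigenvalue $1$ on $\R\mathbf{1}$ and $P^\pi_\cD|_{\CzD}$, any eigenvalue $\sigma\neq1$ of $P^\pi_\cD$ is an eigenvalue of $P_\cD$ distinct from $1-\lambda$, whence $\Re\sigma\leq1-\lambda-\gamma$ by Definition~\ref{defi:gamma}. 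For part~\eqref{it:spectpi:2}, $\CzD$ is stable under the semigroup $P^\pi_{t,\cD}=\mathrm{e}^{tL^\pi_\cD}$, and on $\CzD$ one has $L^\pi_\cD=P_\cD-I_{\CD}$, whose eigenvalues are $\sigma-1$ with $\sigma\in\mathrm{spec}(P_\cD)\setminus\{1-\lambda\}$ and therefore have real part at most $-(\lambda+\gamma)$ by part~\eqref{it:spectpi:1}. A standard estimate on matrix exponentials — via the Jordan decomposition, since $P_\cD$ need not be diagonalisable, which is exactly what forces the arbitrarily small loss $\delta$ — then gives, for each $\delta>0$, a constant $C_\delta$ with $\|\mathrm{e}^{tL^\pi_\cD}\|\leq C_\delta\mathrm{e}^{-t(\lambda+\gamma-\delta)}$ as an operator on $\CzD$ in any prescribed norm; using the equivalence of norms on the finite-dimensional space $\CD$ and the trivial bound $\max_{x}P^\pi_{t,\cD}f(x)\leq\max_{x}|P^\pi_{t,\cD}f(x)|$ yields the stated inequality. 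The elementary linear-algebra facts invoked here are those of Appendix~\ref{app:linalg}.

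For part~\eqref{it:spectpi:3}, the idea is to establish, for every $f\in\CD$, the identity $L^\pi_\cD f=(P_\cD-I_{\CD})(f-\langle\pi,f\rangle\mathbf{1})$, which follows by expanding $L^\pi_\cD f=P_\cD f-f+\langle\pi,f\rangle q$ from~\eqref{eq:PpiPD} and substituting $P_\cD\mathbf{1}=\mathbf{1}-q$ from~\eqref{eq:q}. Since $(L^\pi_\cD)^*\pi=(P^\pi_\cD)^*\pi-\pi=0$ by~\eqref{eq:Ppi1}, the range of $L^\pi_\cD$ lies in $\CzD$, on which subtracting the $\pi$-mean is the identity; hence the identity iterates to $(L^\pi_\cD)^k f=(P_\cD-I_{\CD})^k(f-\langle\pi,f\rangle\mathbf{1})$ for all $k\geq1$. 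Summing the exponential series and recalling $Q_t=\mathrm{e}^{t(P_\cD-I_{\CD})}$ from~\eqref{eq:Qt} then gives $P^\pi_{t,\cD}f=f+(Q_t-I_{\CD})(f-\langle\pi,f\rangle\mathbf{1})=Q_t(f-\langle\pi,f\rangle\mathbf{1})+\langle\pi,f\rangle\mathbf{1}$, which is the asserted formula after evaluation at $x\in\cD$.

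I do not expect a real obstacle: the whole statement is bookkeeping around the splitting $\CD=\R\mathbf{1}\oplus\CzD$ and the coincidence $P^\pi_\cD=P_\cD$ on $\CzD$. The only step that deserves care is the quantitative passage in part~\eqref{it:spectpi:2} from the spectral bound to a genuine operator-norm decay of the semigroup, which must accommodate possible non-normality of $P_\cD$ and is the reason the rate appears as $\lambda+\gamma-\delta$ rather than $\lambda+\gamma$.
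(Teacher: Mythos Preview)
Your proposal is correct and follows essentially the same approach as the paper: both hinge on the key observation that $P^\pi_\cD$ restricted to the stable subspace $\CzD$ coincides with $P_\cD$, from which \eqref{it:spectpi:1} and \eqref{it:spectpi:2} follow by the spectral gap and standard matrix-exponential bounds, and \eqref{it:spectpi:3} by the coincidence of $Q_t$ and $P^\pi_{t,\cD}$ on $\CzD$. Your treatment of \eqref{it:spectpi:3} via the generator identity $L^\pi_\cD f=(P_\cD-I_{\CD})(f-\langle\pi,f\rangle\mathbf{1})$ and iteration is a slightly more computational variant of the paper's one-line argument (split $f$ along $\R\mathbf{1}\oplus\CzD$ and use $P^\pi_{t,\cD}|_{\CzD}=Q_t|_{\CzD}$ together with $P^\pi_{t,\cD}\mathbf{1}=\mathbf{1}$), but the content is the same.
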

\begin{proof}
  All three assertions of the lemma follow from the observation that, by~\eqref{eq:CzD} and~\eqref{eq:PpiPD}, the restriction of $P^\pi_\cD$ to the stable subspace $\CzD$ coincides with $P_\cD$. Then any eigenvalue of $P^\pi_\cD$ which is not equal to $1$ is necessarily an eigenvalue of $P_\cD$ and thus satisfies Assertion~\eqref{it:spectpi:1} as a consequence of Definition~\ref{defi:gamma}. The latter assertion implies that, for any $\delta>0$, the family of operators $\{\ee^{t(\lambda+\gamma-\delta)}P^\pi_{t,\cD}, t \geq 0\}$ is bounded on $\CzD$ and thereby yields Assertion~\eqref{it:spectpi:2}. One finally obtains Assertion~\eqref{it:spectpi:3} by noting that the operators $Q_t = \ee^{t (P_\cD-I_{\CD})}$ and $P^\pi_{t,\cD} = \ee^{t(P^\pi_\cD-I_{\CD})}$ coincide on $\CzD$.
\end{proof}

\subsection{Two related operators} To prove Theorem~\ref{theo:CLT}, we shall show that $\sqrt{n}(\upeta^n_\infty-\pi)$ converges to the stationary distribution of a linear diffusion process in $\MzD$, with drift and diffusion operators expressed in terms of quantities related to the $\pi$-return process. We introduce these operators in the present subsection and will use them in Sections~\ref{s:tight} and~\ref{s:gauss}.

\subsubsection{The drift operator} We recall that by~\eqref{eq:Ppi1} and~\eqref{eq:MzD}, the subspace $\MzD$ of $\MD$ is stable by the adjoint $(P^\pi_\cD)^* : \MD \to \MD$ of the operator $P^\pi_\cD : \CD \to \CD$.

\begin{definition}[Drift operator]\label{defi:B0}
  The \emph{drift operator} is the operator $B_0 : \MzD \to \MzD$ defined by
  \begin{equation*}
    B_0 := (P^\pi_\cD)^* - (1-\lambda) I_{\MzD} = (L^\pi_\cD)^* + \lambda I_{\MzD}.
  \end{equation*}
\end{definition}

\begin{remark}[Spectrum]\label{rk:spB0}
  By Lemma~\ref{lem:spectpi}, any eigenvalue $\tau$ of $B_0$ satisfies $\Re \tau \leq -\gamma$.
\end{remark}

\subsubsection{The diffusion operator} Recall the definition of the Dirichlet form $\mathcal{A}^\pi_\cD$ of the $\pi$-return process in Subsection~\ref{ss:defipireturn}. Since, by Remark~\ref{rk:idendual}, the dual of $\CzD$ is identified with $\MzD$, by Riesz' Theorem, there exists a symmetric operator $A^\pi_\cD : \CzD \to \MzD$ such that, for any $f \in \CzD$,
\begin{equation*}
  \langle A^\pi_\cD f, f\rangle = \mathcal{A}^\pi_\cD(f).
\end{equation*}
We call $A^\pi_\cD$ the \emph{diffusion operator}; notice that it can be expressed as the restriction to $\CzD$ of the symmetric part of $-L^\pi_\cD$ in $L^2(\pi)$.

The irreducibility of the $\pi$-return process implies the following result (elementary material on symmetric and positive definite operators is gathered in Appendix~\ref{app:linalg}).
\begin{lemma}[On the operator $A^\pi_\cD$]\label{lem:ApiD}
  The operator $A^\pi_\cD$ is positive definite, in the sense that it satisfies $\langle A^\pi_\cD f, f\rangle > 0$, for all $f \in \CzD\setminus\{0\}$.
\end{lemma}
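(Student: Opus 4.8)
The plan is to show that $\mathcal{A}^\pi_\cD(f) = 0$ forces $f$ to be constant, which on $\CzD$ means $f = 0$. First I would recall the explicit formula
\[
  \mathcal{A}^\pi_\cD(f) = \frac{1}{2} \sum_{x,y \in \cD} \pi(x)\,p^\pi_\cD(x,y)\,\left[f(y)-f(x)\right]^2,
\]
and observe that every term in this sum is nonnegative, since $\pi(x) > 0$ for all $x \in \cD$ by Proposition~\ref{prop:QSD}\eqref{it:QSD:2.5} and $p^\pi_\cD(x,y) = p_\cD(x,y) + q(x)\pi(y) \geq 0$. Hence $\mathcal{A}^\pi_\cD(f) \geq 0$ always, and $\mathcal{A}^\pi_\cD(f) = 0$ if and only if $f(x) = f(y)$ for every pair $(x,y)$ with $p^\pi_\cD(x,y) > 0$.

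The core of the argument is then a connectivity statement: the oriented graph on $\cD$ with an edge from $x$ to $y$ whenever $p^\pi_\cD(x,y)>0$ is strongly connected. This follows from Assumption~\eqref{ass:Irr}: the irreducibility of $P_\cD$ already guarantees that for any $x,z \in \cD$ there is a chain $x = x_0, x_1, \dots, x_k = z$ with $p_\cD(x_{i-1},x_i) > 0$, and since $p^\pi_\cD(x_{i-1},x_i) \geq p_\cD(x_{i-1},x_i) > 0$, the same chain works for $p^\pi_\cD$. Therefore, if $\mathcal{A}^\pi_\cD(f) = 0$, then $f$ is constant along every such chain, so $f(x) = f(z)$ for all $x, z \in \cD$; that is, $f = c\mathbf{1}$ for some $c \in \R$. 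But $f \in \CzD$ means $\langle \pi, f\rangle = 0$, and since $\langle \pi, \mathbf{1}\rangle = 1$ this forces $c = 0$, hence $f = 0$. Equivalently, $\langle A^\pi_\cD f, f\rangle = \mathcal{A}^\pi_\cD(f) > 0$ for every $f \in \CzD \setminus \{0\}$, which is the claimed positive definiteness. (One should also note $A^\pi_\cD$ is genuinely symmetric as an operator $\CzD \to \MzD$ in the sense of the paper's conventions, which is already recorded in the text preceding the lemma via the Riesz representation, so nothing extra is needed there.)

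There is essentially no obstacle here; the only point requiring a little care is the bookkeeping around the identification of $\CzD$ with $\MzD$ (Remark~\ref{rk:idendual}) so that ``positive definite'' is being asserted for the right bilinear pairing, but this is exactly the pairing $\langle A^\pi_\cD f, f\rangle = \mathcal{A}^\pi_\cD(f)$ used in the definition of $A^\pi_\cD$, so the statement is immediate once the connectivity argument is in place. Alternatively, one could phrase the whole thing spectrally: $-L^\pi_\cD$ restricted to $L^2(\pi)$ has $\mathbf{1}$ as the unique (up to scalar) element of its kernel by irreducibility, and $A^\pi_\cD$ is the restriction of its symmetric part to the $\pi$-orthogonal complement $\CzD$ of that kernel, on which it is therefore positive definite — but the direct computation above is shorter and more self-contained.
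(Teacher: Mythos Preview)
Your proof is correct and is exactly the standard argument the paper has in mind: the paper does not give a proof but simply states that the lemma follows from the irreducibility of the $\pi$-return process, and your Dirichlet-form-plus-connectivity computation is the canonical way to make that one-line remark explicit.
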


\section{Tightness of the fluctuation field}\label{s:tight}

In this section, we prove the following result.

\begin{lemma}[Tightness of the fluctuation field]\label{lem:tight}
  Let $\|\cdot\|$ be any norm on $\MD$. For all $\epsilon > 0$, there exists $r_\epsilon \in (0,+\infty)$ such that, for all $n \geq 2$,
  \begin{equation*}
    \Pr(\sqrt{n}\|\upeta^n_\infty-\pi\| > r_\epsilon) \leq \epsilon.
  \end{equation*}
\end{lemma}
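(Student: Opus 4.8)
The plan is to establish tightness of the fluctuation field via a second-moment bound of the form $\Exp[\langle \sqrt{n}(\upeta^n_\infty-\pi), f\rangle^2] \leq C_f$ uniformly in $n$, for a spanning family of test functions $f$, and then conclude by Markov's inequality together with the equivalence of norms on the finite-dimensional space $\MD$. Concretely, I would first reduce to controlling $\Exp[\langle \upeta^n_\infty-\pi, f\rangle^2]$ for $f \in \CzD$ (the case $f = \mathbf{1}$ is trivial since $\langle \upeta^n_\infty-\pi, \mathbf{1}\rangle = 0$), and show this quantity is $\grandO(1/n)$.

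The core of the argument is an application of the stationarity of $\upeta^n_\infty$ through the generator $\genLn$: for any $\phi : \PnD \to \R$ we have $\Exp[\genLn\phi(\upeta^n_\infty)] = 0$. I would apply this identity to the quadratic test function $\phi(\eta) := \langle \eta - \pi, f\rangle^2$ for a suitable $f$, and compute $\genLn\phi(\eta)$ explicitly from Definition~\ref{defi:FV}. Expanding $\phi(\eta+\theta^{x,y}/n) - \phi(\eta) = \frac{2}{n}\langle\eta-\pi,f\rangle\langle\theta^{x,y},f\rangle + \frac{1}{n^2}\langle\theta^{x,y},f\rangle^2$ and using $\langle\theta^{x,y},f\rangle = f(y)-f(x)$, summing against the jump rates $n\eta(x)(p_\cD(x,y)+q(x)\frac{n\eta(y)}{n-1})$ will produce, after rearranging, a "drift" term of the form $\frac{2}{n}\Exp[\langle\upeta^n_\infty-\pi,f\rangle\langle(P_\cD^\pi)^*(\upeta^n_\infty-\pi) - (1-\lambda)(\upeta^n_\infty-\pi), \cdot\rangle]$ — i.e.\ something governed by the drift operator $B_0$ of Definition~\ref{defi:B0} — plus a lower-order "carré du champ" term of order $1/n^2$ times a bounded quantity, plus remainder terms that are $\grandO(1/n)$ times $\Exp[\|\upeta^n_\infty-\pi\|]$ (which tends to $0$ by Proposition~\ref{prop:LLN}). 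The point is that the leading drift part, when $f$ is chosen as (a component of) the solution of an appropriate \emph{Lyapunov equation} $B_0 S + S B_0^* = -A$ with $A$ positive definite — which exists and is positive definite precisely because all eigenvalues of $B_0$ have negative real part by Remark~\ref{rk:spB0} — gives back a negative multiple of $\Exp[n\langle\upeta^n_\infty-\pi,f\rangle^2]$, allowing us to absorb it and close the estimate.

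The cleanest packaging is probably to work directly with the full rank-one-type observable: choose a symmetric positive definite $S : \CzD \to \MzD$ solving the Lyapunov equation with respect to $B_0$, apply the stationarity identity to $\phi(\eta) := \langle \eta-\pi, S^{-1}(\eta-\pi)\rangle$ (or its natural analogue, interpreting $\eta-\pi$ as an element of $\MzD$ and pairing through $S$), compute $\genLn\phi$, and extract $\Exp[n\,\phi(\upeta^n_\infty)] \leq C$ from the inequality $0 = \Exp[\genLn\phi(\upeta^n_\infty)] \leq -c\,\Exp[n\,\phi(\upeta^n_\infty)] + C + \epsilon_n\sqrt{\Exp[n\,\phi(\upeta^n_\infty)]}$, where $\epsilon_n \to 0$ by Proposition~\ref{prop:LLN}; solving this quadratic inequality in $\sqrt{\Exp[n\,\phi(\upeta^n_\infty)]}$ yields a uniform bound. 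Since $S$ is positive definite on the finite-dimensional $\CzD$, $\phi$ is comparable to $\|\eta-\pi\|^2$, so this gives $\Exp[n\|\upeta^n_\infty-\pi\|^2] \leq C_\epsilon'$ for all $n \geq 2$, and Markov's inequality delivers the claim with $r_\epsilon := \sqrt{C_\epsilon'/\epsilon}$.

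The main obstacle I anticipate is the bookkeeping in the generator computation: one must carefully split the jump rates into the "direct" part $p_\cD(x,y)$ and the "selection" part $q(x)\frac{n\eta(y)}{n-1}$, recognise that the direct part produces the operator $P_\cD$ acting on $\eta-\pi$ while the selection part produces the rank-one correction $\langle\pi,f\rangle q$ (via $P^\pi_\cD = P_\cD + \langle\pi,\cdot\rangle q$, equation~\eqref{eq:PpiPD}) up to a discrepancy coming from the factor $\frac{n}{n-1}$ versus $1$ and from $\eta$ versus $\pi$ in the selection probabilities. These discrepancies are exactly the terms I need to bound by $\grandO(1/n)\cdot\Exp[\|\upeta^n_\infty-\pi\|]$, using that $\upeta^n_\infty$ and $\pi$ are probability measures and that all quantities involved are bounded on the compact $\PD$; the subtlety is ensuring no such remainder term is secretly of order $1$. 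A secondary point is that the carré-du-champ term $\frac{1}{n^2}\sum_{x,y}n\eta(x)(\cdots)\langle\theta^{x,y},S^{-1}f\rangle^2$ must be shown to be $\grandO(1/n)$ uniformly, which is immediate since $\eta(x) \leq 1$, the rates are bounded, and $\cD$ is finite — this term is in fact what will reappear, in the $n\to\infty$ limit, as the diffusion coefficient $A^\pi_\cD$ governing the limiting Ornstein--Uhlenbeck process in Section~\ref{s:gauss}.
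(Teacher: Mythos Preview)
Your strategy --- apply $\Exp[\genLn\phi(\upeta^n_\infty)]=0$ to a quadratic $\phi(\eta)=\langle\eta-\pi,R(\eta-\pi)\rangle$ with $R$ chosen via a Lyapunov equation for $B_0$ --- is exactly the starting point the paper uses (Lemma~\ref{lem:somom}). However, there is a real gap in your handling of the ``discrepancy from $\eta$ versus $\pi$ in the selection probabilities''. That discrepancy is \emph{not} $\grandO(1/n)$: carrying out the generator computation (see the proof of Lemma~\ref{lem:somom}) shows that the first-order part of $\genLn\phi(\eta)$ equals
\[
  2\left\langle B_0(\eta-\pi) + \langle\eta-\pi,q\rangle(\eta-\pi),\, R(\eta-\pi)\right\rangle + \grandO(1/n),
\]
so that besides the linear drift $B_0(\eta-\pi)$ there is a genuinely \emph{quadratic} drift $\langle\eta-\pi,q\rangle(\eta-\pi)$. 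Paired with $R(\eta-\pi)$ this gives the cubic term $\langle\eta-\pi,q\rangle\langle\eta-\pi,R(\eta-\pi)\rangle$, which is of the same order as the Lyapunov term you want to extract --- precisely the ``secretly order~$1$'' remainder you were worried about. Your proposed bound $\epsilon_n\sqrt{\Exp[n\phi]}$ does not cover it: Cauchy--Schwarz yields at best $\sqrt{n}\,\epsilon_n\sqrt{\Exp[n\phi]}$ with $\epsilon_n=\sqrt{\Exp[\langle\upeta^n_\infty-\pi,q\rangle^2]}$, and asking $\sqrt{n}\,\epsilon_n$ to stay bounded is exactly the variance estimate you are trying to prove.

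The paper circumvents this more carefully. It does \emph{not} attempt the unconditional bound $\Exp[n\|\upeta^n_\infty-\pi\|^2]\leq C$ (which, as Remark~\ref{rk:var} explains, is only available under the extra hypothesis $\max q-\min q<\gamma$). Instead it restricts to the event $\{|\langle\upeta^n_\infty-\pi,q\rangle|<\gamma/2\}$, whose complement has probability $\leq\epsilon/2$ for large $n$ by Proposition~\ref{prop:LLN}, and solves the Lyapunov equation for the \emph{shifted} operator $\tilde{B}_0=B_0+(\gamma/2)I_{\MzD}$, which still has spectrum in the open left half-plane by Remark~\ref{rk:spB0}. Writing
\[
  B_0+\langle\eta-\pi,q\rangle I_{\MzD}=\tilde{B}_0-\bigl(\tfrac{\gamma}{2}-\langle\eta-\pi,q\rangle\bigr)I_{\MzD},
\]
the cubic contribution becomes $-(\gamma/2-\langle\eta-\pi,q\rangle)\langle\eta-\pi,R(\eta-\pi)\rangle$, which on the good event is \emph{nonpositive} (since $R$ is positive definite) and can simply be dropped. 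This yields only the restricted second-moment bound $\Exp[n\|\upeta^n_\infty-\pi\|^2\ind{|\langle\upeta^n_\infty-\pi,q\rangle|<\gamma/2}]\leq C'$, and tightness follows by combining this with the LLN bound on the bad event via a union bound. The idea missing from your proposal is this shift-and-condition trick to neutralise the nonlinear part of the drift.
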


An auxiliary moment estimate is established in Subsection~\ref{ss:tight:mom}. The proof of Lemma~\ref{lem:tight} is detailed in Subsection~\ref{ss:tight:pf}.
\subsection{Moment estimate}\label{ss:tight:mom} We first state a moment estimate for $\upeta^n_\infty$. We recall the Definition~\ref{defi:B0} of the drift operator $B_0 : \MzD \to \MzD$, and the definition~\eqref{eq:q} of $q \in \CD$.

\begin{lemma}[Moment estimate]\label{lem:somom}
  Let $R : \MzD \to \CzD$ be a symmetric operator. There exists $C(R) \in [0,+\infty)$ such that, for all $n \geq 2$,
  \begin{equation*}
    \left|\Exp\left[\left\langle B_0(\upeta^n_\infty-\pi) + \langle \upeta^n_\infty-\pi, q\rangle (\upeta^n_\infty-\pi), R(\upeta^n_\infty-\pi)\right\rangle\right]\right| \leq \frac{C(R)}{n}.
  \end{equation*}
\end{lemma}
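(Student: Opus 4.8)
The plan is to combine the stationarity of $\upeta^n_\infty$ with a quadratic test function and to extract the estimate directly from the generator $\genLn$. Since $\upeta^n_\infty$ follows the unique stationary distribution of the irreducible finite-state chain $(\upeta^n_t)_{t\geq0}$, one has $\Exp[\genLn\phi(\upeta^n_\infty)]=0$ for every $\phi$. I would apply this to
\[
  \phi(\eta):=\langle\eta-\pi,R(\eta-\pi)\rangle,
\]
which makes sense for every $\eta$ with $\langle\eta,\mathbf{1}\rangle=1$ (so that $\eta-\pi\in\MzD$), in particular for all configurations $\eta+\theta^{x,y}/n$ occurring in $\genLn$. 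Writing $\xi:=\eta-\pi\in\MzD$ and $g:=R\xi\in\CzD$, expanding the bilinear form and using the symmetry of $R$ (so that $\langle\xi,R\theta^{x,y}\rangle=\langle\theta^{x,y},R\xi\rangle$) yields
\[
  \phi\Bigl(\eta+\tfrac{\theta^{x,y}}{n}\Bigr)-\phi(\eta)=\frac{2}{n}\langle\theta^{x,y},g\rangle+\frac{1}{n^2}\langle\theta^{x,y},R\theta^{x,y}\rangle .
\]

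Substituting this into $\genLn\phi(\eta)$ splits it into an $n^{-1}$-order term and an $n^{-2}$-order term. The $n^{-2}$-term is $\grandO(1/n)$ uniformly in $n$, since each jump-rate coefficient $n\eta(x)\bigl(p_\cD(x,y)+q(x)\tfrac{n\eta(y)}{n-1}\bigr)$ is bounded by $3n$ for $n\geq2$, the set $\cD$ is finite, and $\langle\theta^{x,y},R\theta^{x,y}\rangle$ depends only on $R$. For the $n^{-1}$-term I would use $\langle\theta^{x,y},g\rangle=g(y)-g(x)$ together with $\sum_{y\in\cD}p_\cD(x,y)=1-q(x)$ from~\eqref{eq:q}: the contribution of the $p_\cD$-rates equals $2\langle\eta,L_\cD g\rangle+2\langle\eta,qg\rangle$ with $L_\cD:=P_\cD-I_{\CD}$, and the contribution of the $q$-rates equals $\tfrac{2n}{n-1}\bigl(\langle\eta,g\rangle\langle\eta,q\rangle-\langle\eta,qg\rangle\bigr)$. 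Writing $\tfrac{2n}{n-1}=2+\tfrac{2}{n-1}$, the two $\langle\eta,qg\rangle$ contributions cancel exactly, and the leftover $\tfrac{2}{n-1}$-terms are again $\grandO(1/n)$ uniformly (all brackets are controlled since $\|g\|_\infty\leq\|R\|\,\|\xi\|$, and $\xi,\pi,\eta$ have bounded mass). Hence, with $g=R(\upeta^n_\infty-\pi)$,
\[
  0=\Exp[\genLn\phi(\upeta^n_\infty)]=2\,\Exp\bigl[\langle\upeta^n_\infty,L_\cD g\rangle+\langle\upeta^n_\infty,g\rangle\langle\upeta^n_\infty,q\rangle\bigr]+\grandO(1/n).
\]

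It then remains to rewrite the leading bracket in terms of $\xi=\upeta^n_\infty-\pi$ and $B_0$. Using $P_\cD^*\pi=(1-\lambda)\pi$ (Proposition~\ref{prop:QSD}\eqref{it:QSD:2}), $\langle\pi,q\rangle=\lambda$ (Proposition~\ref{prop:QSD}\eqref{it:QSD:4}) and $\langle\pi,g\rangle=0$ (since $g\in\CzD$), one gets $\langle\eta,L_\cD g\rangle=\langle\xi,L_\cD g\rangle$ and $\langle\eta,g\rangle\langle\eta,q\rangle=\lambda\langle\xi,g\rangle+\langle\xi,q\rangle\langle\xi,g\rangle$; moreover, from Definition~\ref{defi:B0} and~\eqref{eq:PpiPD} (so that $L^\pi_\cD g=L_\cD g$ for $g\in\CzD$), $\langle B_0\xi,R\xi\rangle=\langle\xi,L^\pi_\cD g\rangle+\lambda\langle\xi,g\rangle=\langle\xi,L_\cD g\rangle+\lambda\langle\xi,g\rangle$. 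Collecting these identities, the leading term equals $2\,\Exp[\langle B_0\xi+\langle\xi,q\rangle\xi,R\xi\rangle]$, and the displayed equation gives $|\Exp[\langle B_0\xi+\langle\xi,q\rangle\xi,R\xi\rangle]|\leq C(R)/n$ with $C(R)$ the explicit constant coming from the bounds above. The proof is essentially bookkeeping; the only points requiring care are the exact cancellation of the $\langle\eta,qg\rangle$ terms and the uniformity (in $n$) of the discarded $\grandO(1/n)$ contributions, whose constants must be made to depend only on $R$ and $|\cD|$.
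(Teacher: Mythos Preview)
Your proposal is correct and follows essentially the same approach as the paper: both apply the stationarity identity $\Exp[\genLn\phi(\upeta^n_\infty)]=0$ to the quadratic test function $\phi(\eta)=\langle\eta-\pi,R(\eta-\pi)\rangle$, expand the increment, and isolate the leading drift term while bounding the $\grandO(1/n)$ remainders. The only cosmetic difference is that the paper evaluates the drift by first computing the measure $\sum_{x,y}\eta(x)\bigl(p_\cD(x,y)+q(x)\tfrac{n\eta(y)}{n-1}\bigr)\theta^{x,y}$ and then pairing it with $R(\eta-\pi)$, whereas you compute the pairing $\sum_{x,y}\eta(x)(\ldots)\langle\theta^{x,y},g\rangle$ directly on the function side; the algebraic cancellations and the final identification with $B_0$ are identical.
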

\begin{proof}
  In the proof, we shall employ the operator $Q : \MD \to \MD$ defined by
  \begin{equation*}
    \forall \rho \in \MD, \quad \forall x \in \cD, \qquad Q\rho(x) := q(x)\rho(x).
  \end{equation*}
  
  Let $R : \MzD \to \CzD$ be a symmetric operator. Let us define the function $\phi : \PD \to \R$ by
  \begin{equation*}
    \forall \eta \in \PD, \qquad \phi(\eta) := \frac{1}{2}\langle \eta-\pi, R(\eta-\pi)\rangle.
  \end{equation*}
  For all $x,y \in \cD$, for all $\eta \in \PD$,
  \begin{equation*}
    \phi\left(\eta + \frac{\theta^{x,y}}{n}\right) - \phi(\eta) =  \frac{1}{n}\langle \theta^{x,y}, R(\eta-\pi)\rangle + \frac{1}{2n^2} \langle \theta^{x,y}, R\theta^{x,y}\rangle,
  \end{equation*}
  where we recall the definition~\eqref{eq:theta} of $\theta^{x,y}$. 
  
  Since the definition of $\upeta^n_\infty$ implies that $\Exp[\genLn\phi(\upeta^n_\infty)]=0$, we get the identity
  \begin{equation}\label{eq:pf:estim:1}
    \begin{aligned}
      0 &= \Exp\left[\sum_{x,y \in \cD} \upeta^n_\infty(x)\left(p_\cD(x,y)+q(x)\frac{n\upeta^n_\infty(y)}{n-1}\right)\langle \theta^{x,y}, R(\upeta^n_\infty-\pi)\rangle\right]\\
      & \quad + \frac{1}{2n}\Exp\left[\sum_{x,y \in \cD} \upeta^n_\infty(x)\left(p_\cD(x,y)+q(x)\frac{n\upeta^n_\infty(y)}{n-1}\right) \langle \theta^{x,y}, R\theta^{x,y}\rangle\right].
    \end{aligned}
  \end{equation}
  We first make explicit the value of the first expectation in the right-hand side of~\eqref{eq:pf:estim:1}. By~\eqref{eq:theta} and~\eqref{eq:q}, for all $\eta \in \PD$, for all $z \in \cD$,
  \begin{equation*}\label{eq:pf:estim:2}
    \begin{aligned}
      & \sum_{x,y \in \cD} \eta(x)\left(p_\cD(x,y)+q(x)\frac{n\eta(y)}{n-1}\right)\theta^{x,y}(z)\\
      & \qquad = \sum_{x \in \cD} \eta(x)\left(p_\cD(x,z)+q(x)\frac{n\eta(z)}{n-1}\right) - \sum_{y \in \cD} \eta(z)\left(p_\cD(z,y)+q(z)\frac{n\eta(y)}{n-1}\right)\\
      & \qquad = P_\cD^*\eta(z) + \frac{n}{n-1}\langle \eta, q\rangle \eta(z) - \eta(z)(1-q(z)) - \frac{n}{n-1}\eta(z)q(z)\\
      & \qquad = \left(P_\cD^*-I_{\MD}-\frac{Q}{n-1}\right)\eta(z) + \frac{n}{n-1}\langle \eta, q\rangle \eta(z)\\
      & \qquad = \left(P_\cD^*-I_{\MD}\right)\eta(z) + \langle \eta, q\rangle \eta(z) + \frac{1}{n-1}\left\{-Q\eta(z) + \langle \eta, q\rangle \eta(z)\right\},
    \end{aligned}
  \end{equation*}
  so that 
  \begin{equation}\label{eq:pf:estim:3}
    \begin{aligned}
      & \Exp\left[\sum_{x,y \in \cD} \upeta^n_\infty(x)\left(p_\cD(x,y)+q(x)\frac{n\upeta^n_\infty(y)}{n-1}\right)\langle \theta^{x,y}, R(\upeta^n_\infty-\pi)\rangle\right]\\
      & \qquad= \Exp\left[\left\langle \left(P_\cD^*-I_{\MD}\right)\upeta^n_\infty + \langle \upeta^n_\infty, q\rangle \upeta^n_\infty, R(\upeta^n_\infty-\pi)\right\rangle\right]\\
      & \qquad\quad + \frac{1}{n-1}\left\{\Exp\left[\left\langle -Q\upeta^n_\infty + \langle \upeta^n_\infty, q\rangle \upeta^n_\infty, R(\upeta^n_\infty-\pi)\right\rangle\right]\right\}.
    \end{aligned}
  \end{equation}
  
  Using Proposition~\ref{prop:QSD}, we now rewrite, for all $\eta \in \PD$,
  \begin{equation*}
    (P_\cD^*-I_{\MD})\eta + \langle \eta, q\rangle \eta = (P_\cD^*-(1-\lambda)I_{\MD})(\eta-\pi) + \langle \eta-\pi, q\rangle (\eta-\pi) + \langle \eta-\pi, q\rangle \pi, 
  \end{equation*}
  so that
  \begin{equation*}
    \left\langle \left(P_\cD^*-I_{\MD}\right)\eta + \langle \eta, q\rangle \eta, R(\eta-\pi)\right\rangle= \left\langle B_0(\eta-\pi) + \langle \eta-\pi, q\rangle (\eta-\pi), R(\eta-\pi)\right\rangle,
  \end{equation*}
  where we have used the Definition~\ref{defi:B0} of $B_0$ as well as the fact that $R(\eta-\pi)\in\CzD$ to justify that $\langle \langle \eta-\pi, q\rangle \pi, R(\eta-\pi)\rangle$ vanishes. As a consequence, the first expectation in the right-hand side of~\eqref{eq:pf:estim:3} rewrites
  \begin{equation}\label{eq:pf:estim:4}
    \begin{aligned}
      & \Exp\left[\left\langle \left(P_\cD^*-I_{\MD}\right)\upeta^n_\infty + \langle \upeta^n_\infty, q\rangle \upeta^n_\infty, R(\upeta^n_\infty-\pi)\right\rangle\right]\\
      & \qquad = \Exp\left[\left\langle B_0(\upeta^n_\infty-\pi) + \langle \upeta^n_\infty-\pi, q\rangle (\upeta^n_\infty-\pi), R(\upeta^n_\infty-\pi)\right\rangle\right].
    \end{aligned}
  \end{equation}
  
  Combining~\eqref{eq:pf:estim:1}, \eqref{eq:pf:estim:3} and~\eqref{eq:pf:estim:4}, we finally get the identity
  \begin{equation*}
    \begin{aligned}
      & \Exp\left[\left\langle B_0(\upeta^n_\infty-\pi) + \langle \upeta^n_\infty-\pi, q\rangle (\upeta^n_\infty-\pi), R(\upeta^n_\infty-\pi)\right\rangle\right]\\
      & \qquad= - \frac{1}{n-1}\Exp\left[\left\langle -Q\upeta^n_\infty + \langle \upeta^n_\infty, q\rangle \upeta^n_\infty, R(\upeta^n_\infty-\pi)\right\rangle\right]\\
      & \qquad\quad - \frac{1}{2n}\Exp\left[\sum_{x,y \in \cD} \upeta^n_\infty(x)\left(p_\cD(x,y)+q(x)\frac{n\upeta^n_\infty(y)}{n-1}\right)\langle \theta^{x,y}, R\theta^{x,y}\rangle\right],
    \end{aligned}
  \end{equation*}
  the right-hand side of which is bounded in modulus by $C(R)/n$ for some constant $C(R) \in [0,+\infty)$ depending on $R$.
\end{proof}

\subsection{Proof of Lemma~\ref{lem:tight}}\label{ss:tight:pf} We are now ready to present the proof of Lemma~\ref{lem:tight}. 

\begin{proof}[Proof of Lemma~\ref{lem:tight}] Let us fix a norm $\|\cdot\|$ on $\MD$, and $\epsilon > 0$. The proof is divided into 3 steps.

  \emph{Step~1.} Recall the Definition~\ref{defi:gamma} of the spectral gap $\gamma>0$ of $P_\cD$. By Proposition~\ref{prop:LLN}, there exists $n_0 \geq 2$ depending on $\epsilon$ such that, for any $n \geq n_0$,
  \begin{equation*}
    \Pr(|\langle\upeta^n_\infty-\pi,q\rangle| \geq \gamma/2) \leq \epsilon/2.
  \end{equation*}
  As a consequence, using Markov's inequality, we get that for any $r \in (0,+\infty)$, for any $n \geq n_0$,
  \begin{equation*}
    \Pr(\sqrt{n}\|\upeta^n_\infty-\pi\| \geq r) \leq \frac{1}{r^2}\Exp\left[n\|\upeta^n_\infty-\pi\|^2\ind{|\langle\upeta^n_\infty-\pi,q\rangle| < \gamma/2}\right] + \frac{\epsilon}{2}.
  \end{equation*}
  In the next step, we use Lemma~\ref{lem:somom} and Proposition~\ref{prop:Lyapgen} to control the expectation in the right-hand side.
  
  \emph{Step~2.} Let us rewrite the result of Lemma~\ref{lem:somom} as
  \begin{equation}\label{eq:pfmoment:0}
    \left|\Exp\left[\left\langle \left(B_0+\langle \upeta^n_\infty-\pi, q\rangle I_{\MzD}\right)(\upeta^n_\infty-\pi), R(\upeta^n_\infty-\pi)\right\rangle\right]\right| \leq \frac{C(R)}{n},
  \end{equation}
  for any symmetric operator $R : \MzD \to \CzD$. Now, for any $\eta \in \PD$,
  \begin{equation}\label{eq:pfmoment:1}
    B_0+\langle \eta-\pi,q\rangle I_{\MzD} = \tilde{B}_0 -\left(\frac{\gamma}{2}-\langle \eta-\pi,q\rangle\right) I_{\MzD},
  \end{equation}
  with $\tilde{B}_0 := B_0 + (\gamma/2) I_{\MzD}$. By Remark~\ref{rk:spB0}, all eigenvalues of $\tilde{B}_0$ have a negative real part, therefore applying Proposition~\ref{prop:Lyapgen} for any choice of a symmetric and positive definite operator $A : \CzD \to \MzD$, we obtain that there exists a symmetric and positive definite operator $\tilde{K} : \CzD \to \MzD$ such that
  \begin{equation*}
    \tilde{B}_0 \tilde{K} + \tilde{K} \tilde{B}_0^* + 2A = 0.
  \end{equation*}
  By Proposition~\ref{prop:sympos}, $\tilde{K}$ is invertible and $R := \tilde{K}^{-1}$ is symmetric and positive definite. We thus get, for any $\eta \in \PD$,
  \begin{equation*}
    \langle \tilde{B}_0(\eta-\pi), R(\eta-\pi)\rangle = -\langle A R(\eta-\pi), R(\eta-\pi)\rangle.
  \end{equation*}
  Let $\|\cdot\|_{\CzD}$ be a norm on $\CzD$, and let $\|\cdot\|_{\MzD}$ be the norm induced by $\|\cdot\|$ on $\MzD$. Since $A$ is symmetric and positive definite, by Proposition~\ref{prop:sympos} there exists $c_A \in (0,+\infty)$ such that for all $f \in \CzD$, $\langle Af, f\rangle \geq c_A \|f\|_{\CzD}^2$. As a consequence,
  \begin{equation*}
    \langle A R(\eta-\pi), R(\eta-\pi)\rangle \geq c_A \|R(\eta-\pi)\|_{\CzD}^2 \geq \frac{c_A}{|||\tilde{K}|||^2} \|\eta-\pi\|_{\MzD}^2,
  \end{equation*}
  where we denote by $|||\cdot|||$ the operator norm between the spaces $(\CzD,\|\cdot\|_{\CzD})$ and $(\MzD,\|\cdot\|_{\MzD})$. We deduce that
  \begin{equation}\label{eq:pfmoment:2}
    \langle \tilde{B}_0(\eta-\pi), R(\eta-\pi)\rangle \leq -\frac{c_A}{|||\tilde{K}|||^2} \|\eta-\pi\|_{\MzD}^2.
  \end{equation}
  On the other hand, if $\eta \in \PD$ is such that $|\langle\eta-\pi,q\rangle| < \gamma/2$, then
  \begin{equation}\label{eq:pfmoment:3}
    \left(\frac{\gamma}{2}-\langle \eta-\pi,q\rangle\right)\langle \eta-\pi, R(\eta-\pi)\rangle \geq 0,
  \end{equation}
  where we have used the fact that $R$ is symmetric and positive definite. Combining~\eqref{eq:pfmoment:1}, \eqref{eq:pfmoment:2} and \eqref{eq:pfmoment:3}, we deduce that, for all $\eta \in \PD$,
  \begin{equation*}
    \|\eta-\pi\|^2 \ind{|\langle \eta-\pi,q\rangle| < \gamma/2} \leq -\frac{|||\tilde{K}|||^2}{c_A} \left\langle \left(B_0+\langle \eta-\pi, q\rangle I_{\MzD}\right)(\eta-\pi), R(\eta-\pi)\right\rangle.
  \end{equation*}
  Evaluating this inequality for $\eta=\upeta^n_\infty$, taking the expectation and applying~\eqref{eq:pfmoment:0}, we get
  \begin{equation*}
    \Exp\left[n\|\upeta^n_\infty-\pi\|^2 \ind{|\langle \upeta^n_\infty-\pi,q\rangle| < \gamma/2}\right] \leq  C' := \frac{|||\tilde{K}|||^2}{c_A}C(\tilde{K}^{-1}).
  \end{equation*}
  
  \emph{Step~3.} The final estimates of Steps~1 and~2 show that any choice of $r \geq \sqrt{2C'/\epsilon}$ implies $\Pr(\sqrt{n}\|\upeta^n_\infty-\pi\| \geq r) \leq \epsilon$ for all $n \geq n_0$. On the other hand, it is known that any finite family of random variables in $\MD$ is tight~(\cite[Theorem~1.3, p.~8]{Bil99}), therefore there exists $r'_\epsilon \in (0,+\infty)$ such that $\Pr(\sqrt{n}\|\upeta^n_\infty-\pi\| \geq r'_\epsilon) \leq \epsilon$, for all $n < n_0$. Taking $r_\epsilon$ as the maximum between $r'_\epsilon$ and $\sqrt{2C'/\epsilon}$ completes the proof.
\end{proof}

\begin{remark}[Variance estimates]\label{rk:var}
  Assume that $q$ and $\gamma$ satisfy the condition
  \begin{equation}\label{eq:gamma-qq}
    \alpha := \max_{x \in \cD} q(x) - \min_{x \in \cD} q(x) < \gamma.
  \end{equation}
  Then employing the decomposition
  \begin{equation*}
    B_0 + \langle \eta-\pi,q\rangle I_{\MzD} = \tilde{B}_0 - \left(\alpha - \langle \eta-\pi,q\rangle\right) I_{\MzD}, \quad \tilde{B}_0 := B_0 + \alpha I_{\MzD},
  \end{equation*}
  in Step~2 of the proof of Lemma~\ref{lem:tight} leads to the variance estimate~\eqref{eq:varestim}, which holds without any smallness condition on $\|\upeta^n_\infty-\pi\|$. Such an estimate was also obtained by~\cite[Theorem~1.3]{CloTha16:SPA}, under an assumption similar to~\eqref{eq:gamma-qq}. The latter assumption is in particular satisfied if $q(x)=\lambda$ for any $x \in \cD$, that is to say, the rate at which the Markov chain $(\mathrm{x}_t)_{t \geq 0}$ exits $\cD$ does not depend on its current position. An example of such a chain is studied by the same authors in~\cite[Section~3.1]{CloTha16:SPA} and~\cite[Section~2]{CloTha16:ALEA}.
\end{remark}

\section{Asymptotic normality of the fluctuation field}\label{s:gauss}

In this section, we complete the proof of Theorem~\ref{theo:CLT} by identifying the law of any limit of $\sqrt{n}(\upeta^n_\infty-\pi)$. Let us sketch our argument. In Subsection~\ref{ss:gauss:fluc}, we interpret the law of $\sqrt{n}(\upeta^n_\infty-\pi)$ as the stationary distribution of a continuous-time Markov chain $(\upxi^n_t)_{t \geq 0}$ in $\MzD$. In Subsection~\ref{ss:gauss:conv}, we describe the $n \to +\infty$ limit of the infinitesimal generator of $(\upxi^n_t)_{t \geq 0}$. In Subsection~\ref{ss:gauss:bgenM}, we show that this limit is the infinitesimal generator of a linear diffusion process in $\MzD$, the unique stationary distribution of which is the Gaussian measure introduced in Theorem~\ref{theo:CLT}. The proof of the latter theorem is completed in Subsection~\ref{ss:gauss:pf}.

\subsection{The process \texorpdfstring{$(\upxi^n_t)_{t \geq 0}$}{xi}}\label{ss:gauss:fluc}
Let $\Phi^n : \PnD \to \MzD$ be defined by
\begin{equation*}
  \forall \eta \in \PnD, \qquad \Phi^n(\eta) := \sqrt{n}(\eta-\pi),
\end{equation*}
and let $\MnD \subset \MzD$ denote the range of $\Phi^n$. For all $t \geq 0$, we define
\begin{equation*}
  \upxi^n_t := \Phi^n(\upeta^n_t) = \sqrt{n}\left(\upeta^n_t-\pi\right).
\end{equation*}
Since $\Phi^n$ is a one-to-one map between $\PnD$ and $\MnD$, $(\upxi^n_t)_{t \geq 0}$ is a continuous-time Markov chain, with infinitesimal generator
\begin{equation*}
  \genMn \psi(\xi) := \genLn (\psi\circ\Phi^n)((\Phi^n)^{-1}(\xi))= \sum_{x,y \in \cD} \vartheta_n(x,y,\xi)\left[\psi\left(\xi+\frac{\theta^{x,y}}{\sqrt{n}}\right)-\psi(\xi)\right],
\end{equation*}
where
\begin{equation*}
  \vartheta_n(x,y,\xi) := n \left(\pi(x) + \frac{\xi(x)}{\sqrt{n}}\right)\left(p_\cD(x,y) + q(x) \frac{n}{n-1}\left(\pi(y) + \frac{\xi(y)}{\sqrt{n}}\right)\right).
\end{equation*}
Besides, the law of the random variable 
\begin{equation}\label{eq:upxininf}
  \upxi^n_\infty := \Phi^n(\upeta^n_\infty) = \sqrt{n}\left(\upeta^n_\infty-\pi\right)
\end{equation}
is the unique stationary distribution of $(\upxi^n_t)_{t \geq 0}$, so that Theorem~\ref{theo:CLT} reduces to proving the convergence in distribution, in $\MzD$, of $\upxi^n_\infty$.

\subsection{Convergence of \texorpdfstring{$\genMn$}{Mn}}\label{ss:gauss:conv} For any smooth function $\psi : \MzD \to \R$, the gradient $\nabla \psi(\xi) \in \CzD$ and the Hessian matrix $\nabla^2 \psi(\xi) : \MzD \to \CzD$ are defined by the Taylor expansion
\begin{equation*}
  \forall \xi, \zeta \in \MzD, \qquad \psi(\xi+\epsilon\zeta) = \psi(\xi) + \epsilon \langle \zeta, \nabla\psi(\xi)\rangle + \frac{\epsilon^2}{2} \langle \zeta, \nabla^2\psi(\xi)\zeta\rangle + \petito(\epsilon^2).
\end{equation*}

Besides, for any symmetric operator $R : \MzD \to \CzD$, we introduce the notation
\begin{equation}\label{eq:A::}
  A^\pi_\cD::R := \frac{1}{2} \sum_{x,y \in \cD} \pi(x)p^\pi_\cD(x,y)\langle \theta^{x,y}, R\theta^{x,y}\rangle,
\end{equation}
where we recall the definition~\eqref{eq:theta} of $\theta^{x,y} \in \MzD$.

\begin{lemma}[Convergence of $\genMn$]\label{lem:cvMn}
  Let $\psi : \MzD \to \R$ be a $C^\infty$ function with compact support. We have
  \begin{equation*}
    \lim_{n \to +\infty} \sup_{\xi \in \MzD} \left|\genMn \psi(\xi)-\bgenM\psi(\xi)\right| = 0,
  \end{equation*}
  where
  \begin{equation*}
    \bgenM \psi(\xi) := \langle B_0 \xi, \nabla\psi(\xi)\rangle + A^\pi_\cD :: \nabla^2\psi(\xi).
  \end{equation*}
\end{lemma}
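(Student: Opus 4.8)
The plan is a straightforward second-order Taylor expansion of $\genMn$, followed by identifying the limit of each coefficient. First I would expand
\begin{equation*}
  \psi\left(\xi+\frac{\theta^{x,y}}{\sqrt{n}}\right)-\psi(\xi) = \frac{1}{\sqrt{n}}\langle \theta^{x,y},\nabla\psi(\xi)\rangle + \frac{1}{2n}\langle \theta^{x,y},\nabla^2\psi(\xi)\theta^{x,y}\rangle + \frac{1}{n^{3/2}} E_n(x,y,\xi),
\end{equation*}
where, because $\psi$ is $C^\infty$ with compact support, its third derivative is bounded and $\|\theta^{x,y}\|$ is bounded (there are finitely many pairs $(x,y)$), so $\sup_{x,y,\xi}|E_n(x,y,\xi)|\leq C_\psi$ uniformly in $n$. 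Next I would insert this into the formula for $\genMn\psi(\xi)$ and collect the contributions by order of $n$. The rate $\vartheta_n(x,y,\xi)$ is a polynomial in $1/\sqrt{n}$ whose leading term (order $n$) is $n\,\pi(x)p_\cD^\pi(x,y)$ — using $p_\cD^\pi(x,y)=p_\cD(x,y)+q(x)\pi(y)$ and the fact that $\frac{n}{n-1}\to 1$ — and whose order-$\sqrt{n}$ term comes from the two places where $\xi(x)/\sqrt{n}$ or $\xi(y)/\sqrt{n}$ appears.

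The order-$n$ part of $\vartheta_n$ multiplied by the order-$n^{-1}$ term of the Taylor expansion gives $\frac{1}{2}\sum_{x,y}\pi(x)p_\cD^\pi(x,y)\langle\theta^{x,y},\nabla^2\psi(\xi)\theta^{x,y}\rangle$, which is exactly $A^\pi_\cD::\nabla^2\psi(\xi)$ by the definition~\eqref{eq:A::}. The order-$n$ part of $\vartheta_n$ multiplied by the order-$n^{-1/2}$ Taylor term gives $\sqrt{n}\sum_{x,y}\pi(x)p_\cD^\pi(x,y)\langle\theta^{x,y},\nabla\psi(\xi)\rangle$; here I would invoke the computation already performed in the proof of Lemma~\ref{lem:somom}, namely that $\sum_{x,y}\eta(x)p_\cD^\pi(x,y)\theta^{x,y} = (P_\cD^*-I_{\MD})\eta + \langle\eta,q\rangle\eta$ when evaluated appropriately — more directly, $\sum_{y}p_\cD^\pi(x,y)\theta^{x,y}$ summed against $\pi(x)$ equals $(P_\cD^\pi)^*\pi-\pi=0$ by~\eqref{eq:Ppi1}, so this term vanishes identically. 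The surviving order-$\sqrt{n}\cdot n^{-1/2}=1$ contribution is the cross term: the order-$\sqrt{n}$ part of $\vartheta_n$ times the order-$n^{-1/2}$ Taylor term. Writing out the order-$\sqrt{n}$ part of $\vartheta_n$ as
\begin{equation*}
  \sqrt{n}\left(\xi(x)p_\cD^\pi(x,y) + \pi(x)q(x)\pi(y)\,[\text{correction}] + \pi(x)q(x)\xi(y)\right) + o(\sqrt{n}),
\end{equation*}
one sees that summing against $\langle\theta^{x,y},\nabla\psi(\xi)\rangle$ and using $\sum_y p_\cD^\pi(x,y)=1$, $\sum_y q(x)\pi(y)=q(x)$, and the definition $B_0=(P_\cD^\pi)^*-(1-\lambda)I_{\MzD}$ together with $\langle\pi,q\rangle=\lambda$ (Proposition~\ref{prop:QSD}\eqref{it:QSD:4}), collapses this to $\langle B_0\xi,\nabla\psi(\xi)\rangle$. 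I would also check separately that the $\frac{n}{n-1}-1=\frac{1}{n-1}$ discrepancies contribute only $\grandO(1/\sqrt{n})$, and that the order-$n^{1/2}$ part of $\vartheta_n$ times the order-$n^{-1}$ Taylor term, as well as all remaining products, are $\grandO(1/\sqrt{n})$ uniformly in $\xi$.

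The uniformity in $\xi$ is the point requiring a little care: although $\xi$ ranges over the unbounded set $\MzD$, $\psi$ has compact support, so $\nabla\psi(\xi)$, $\nabla^2\psi(\xi)$ and the third derivative all vanish outside a fixed compact set $\mathcal{S}$, and on $\mathcal{S}$ they are bounded; moreover the coefficients in $\vartheta_n$ that multiply $\xi(x)$ or $\xi(y)$ are themselves paired with $\nabla\psi(\xi)$, so the only $\xi$-dependence that is not already killed outside $\mathcal{S}$ is linear in $\xi$ and hence bounded on $\mathcal{S}$. Thus every error term is bounded by $C_\psi/\sqrt{n}$ with $C_\psi$ depending only on $\psi$ and on $\max_{x,y}\|\theta^{x,y}\|$, $\max_x q(x)$, etc., which gives the claimed uniform limit. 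The main obstacle is purely bookkeeping: correctly isolating the order-$1$ drift term from the several sources of order-$\sqrt{n}$ terms in $\vartheta_n$ and verifying that the apparent order-$\sqrt{n}$ contributions cancel; no analytic subtlety beyond the compact-support observation is involved.
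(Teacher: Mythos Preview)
Your proposal is correct and follows essentially the same approach as the paper's proof: second-order Taylor expansion of $\psi$, expansion of $\vartheta_n$ in powers of $1/\sqrt{n}$, identification of the drift via the computation $\sum_{x,y}(\xi(x)p_\cD^\pi(x,y)+\pi(x)q(x)\xi(y))\theta^{x,y}=B_0\xi$, and the compact-support argument for uniformity. The only cosmetic difference is that the paper keeps the factor $\tfrac{n}{n-1}$ in a modified rate $p_\cD^{\pi,n}$ throughout (so the would-be order-$\sqrt{n}$ term is shown to be $O(n^{-1/2})$ rather than identically zero), whereas you first absorb the $\tfrac{1}{n-1}$ discrepancy into the error and then use $(P_\cD^\pi)^*\pi=\pi$ to get exact cancellation.
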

\begin{proof}
  Let $\psi : \MzD \to \R$ be a $C^\infty$ function with compact support. There exists a compact set $\mathscr{K} \subset \MzD$ such that for any $\xi \in \MzD\setminus \mathscr{K}$, for all $n \geq 2$,
  \begin{equation*}
    \genMn \psi(\xi) = \bgenM\psi(\xi) = 0.
  \end{equation*}
    
  Thus, we restrict our attention to $\xi \in \mathscr{K}$ and define
  \begin{equation*}
    r^{x,y}_n(\xi) := \psi\left(\xi+\frac{\theta^{x,y}}{\sqrt{n}}\right)-\psi(\xi) - \frac{1}{\sqrt{n}}\langle \theta^{x,y}, \nabla\psi(\xi)\rangle - \frac{1}{2n}\langle \theta^{x,y}, \nabla^2\psi(\xi)\theta^{x,y}\rangle,
  \end{equation*}
  for $x,y \in \cD$. By the Taylor--Lagrange inequality,
  \begin{equation}\label{eq:rn}
    \sup_{n \geq 2} \sup_{\xi \in \mathscr{K}} \max_{x,y \in \cD} n^{3/2}|r^{x,y}_n(\xi)| < +\infty.
  \end{equation}
  On the other hand, for any $\xi \in \mathscr{K}$, we write
  \begin{equation*}
    \genMn \psi(\xi) = \sqrt{n} A^{(1)}_n(\xi) + A^{(2)}_n(\xi) + \frac{1}{\sqrt{n}}A^{(3)}_n(\xi),
  \end{equation*}
  where, letting 
  \begin{equation*}
    p^{\pi,n}_\cD(x,y) := p_\cD(x,y) + \frac{n}{n-1}q(x)\pi(y),
  \end{equation*}
  we write
  \begin{equation*}
    \begin{aligned}
      A^{(1)}_n(\xi) &:= \sum_{x,y \in \cD} \pi(x)p^{\pi,n}_\cD(x,y)\langle \theta^{x,y}, \nabla\psi(\xi)\rangle,\\
      A^{(2)}_n(\xi) &:= \sum_{x,y \in \cD} \left(\frac{n}{n-1}\pi(x)q(x)\xi(y) + \xi(x)p^{\pi,n}_\cD(x,y)\right)\langle \theta^{x,y}, \nabla\psi(\xi)\rangle\\
      & \quad + \frac{1}{2}\sum_{x,y \in \cD}\pi(x)p^{\pi,n}_\cD(x,y)\langle \theta^{x,y}, \nabla^2\psi(\xi)\theta^{x,y}\rangle,\\
      A^{(3)}_n(\xi) &:= \frac{1}{2} \sum_{x,y \in \cD} \left(\frac{n}{n-1}\pi(x)q(x)\xi(y) + \xi(x)p^{\pi,n}_\cD(x,y)\right)\langle \theta^{x,y}, \nabla^2\psi(\xi)\theta^{x,y}\rangle\\
      & \quad + n^{3/2}\sum_{x,y \in \cD} \left(\pi(x) + \frac{\xi(x)}{\sqrt{n}}\right)\left(p^{\pi,n}_\cD(x,y) + \frac{\sqrt{n}}{n-1}\pi(x)q(x)\xi(y)\right)r^{x,y}_n(\xi).
    \end{aligned}
  \end{equation*}
  
  A short computation shows that, for any $z \in \cD$,
  \begin{equation*}
    \sum_{x,y \in \cD} \pi(x)p^{\pi,n}_\cD(x,y) \theta^{x,y}(z) = \frac{1}{n-1}(\lambda\pi(z)-q(z)\pi(z)),
  \end{equation*}
  so that
  \begin{equation*}
    \lim_{n \to +\infty} \sup_{\xi \in \mathscr{K}} |\sqrt{n} A^{(1)}_n(\xi)| = 0.
  \end{equation*}
  Likewise, it follows from~\eqref{eq:rn} and the compactness of $\mathscr{K}$ that $|A^{(3)}_n(\xi)|$ is bounded uniformly in $n \geq 2$ and $\xi \in \mathscr{K}$, so that
  \begin{equation*}
    \lim_{n \to +\infty} \sup_{\xi \in \mathscr{K}} \left|\frac{1}{\sqrt{n}} A^{(3)}_n(\xi)\right| = 0.
  \end{equation*}
  
  Thus, it remains to show that
  \begin{equation*}
    \lim_{n \to +\infty} \sup_{\xi \in \mathscr{K}} \left|A^{(2)}_n(\xi) - \bgenM\psi(\xi)\right| = 0.
  \end{equation*}
  Using the convergence of $p^{\pi,n}_\cD(x,y)$ to $p^\pi_\cD(x,y)$, we may first observe that 
  \begin{equation*}
    \begin{aligned}
      & \lim_{n \to +\infty} \sum_{x,y \in \cD} \left(\frac{n}{n-1}\pi(x)q(x)\xi(y) + \xi(x)p^{\pi,n}_\cD(x,y)\right)\langle \theta^{x,y}, \nabla\psi(\xi)\rangle\\
      & \qquad = \sum_{x,y \in \cD} \left(\pi(x)q(x)\xi(y) + \xi(x)p^\pi_\cD(x,y)\right)\langle \theta^{x,y}, \nabla\psi(\xi)\rangle,
    \end{aligned} 
  \end{equation*}
  and that the limit is uniform in $\xi \in \mathscr{K}$. For any $z \in \cD$,
  \begin{equation*}
    \begin{aligned}
      & \sum_{x,y \in \cD} \left(\pi(x)q(x)\xi(y) + \xi(x)p^\pi_\cD(x,y)\right) \theta^{x,y}(z)\\
      &\qquad= \sum_{x \in \cD} \left(\pi(x)q(x)\xi(z) + \xi(x)p^\pi_\cD(x,z)\right) - \sum_{y \in \cD} \left(\pi(z)q(z)\xi(y) + \xi(z)p^\pi_\cD(z,y)\right)\\
      &\qquad= \lambda \xi(z) + (P^\pi_\cD)^*\xi(z) - \xi(z),
    \end{aligned}
  \end{equation*}
  where we have used Proposition~\ref{prop:QSD} to write $\langle \pi,q\rangle=\lambda$, the fact that $\xi \in \MzD$ to make the first sum over $y$ vanish, and the fact that $P^\pi_\cD$ is a stochastic matrix. By the Definition~\ref{defi:B0} of $B_0$, we therefore conclude that
  \begin{equation*}
    \sum_{x,y \in \cD} \left(\pi(x)q(x)\xi(y) + \xi(x)p^\pi_\cD(x,y)\right) \theta^{x,y} = B_0\xi,
  \end{equation*}
  so that the first term in the definition of $A^{(2)}_n(\xi)$ converges to $\langle B_0 \xi, \nabla \psi(\xi)\rangle$, uniformly in $\xi \in \mathscr{K}$. With similar arguments, it is immediate to show that the second term converges, uniformly in $\xi \in \mathscr{K}$, to $A^\pi_\cD :: \nabla^2\psi(\xi)$ defined by~\eqref{eq:A::}, which completes the proof.
\end{proof}

\subsection{Identification of \texorpdfstring{$\bgenM$}{M}}\label{ss:gauss:bgenM} Combining the results of Lemma~\ref{lem:ApiD} and Propositions~\ref{prop:spectral} and~\ref{prop:sympos}, we deduce that there exist $\zeta^1, \ldots, \zeta^{k-1} \in \MzD$ and $c^1, \ldots, c^{k-1} > 0$ such that 
\begin{equation*}
  \forall f \in \CzD, \qquad \langle A^\pi_\cD f, f\rangle = \sum_{l=1}^{k-1} c^l\langle \zeta^l, f\rangle^2,
\end{equation*}
where $k \geq 1$ is the cardinality of $\cD$ (see Appendix~\ref{app:linalg}).

Let $(\mathrm{w}^1_t)_{t \geq 0}, \ldots, (\mathrm{w}^{k-1}_t)_{t \geq 0}$ be independent Brownian motions, and let us consider the linear stochastic differential equation
\begin{equation*}
  \dd\bar{\upxi}_t = B_0\bar{\upxi}_t\dd t + \sum_{l=1}^{k-1} \sqrt{2c^l}\zeta^l \dd\mathrm{w}^l_t,
\end{equation*}
which defines a diffusion process $(\bar{\upxi}_t)_{t \geq 0}$ in $\MzD$. 

\begin{lemma}[Infinitesimal generator of $(\bar{\upxi}_t)_{t \geq 0}$]\label{lem:bgenM}
  The infinitesimal generator of $(\bar{\upxi}_t)_{t \geq 0}$ is the operator $\bgenM$ defined in Lemma~\ref{lem:cvMn}.
\end{lemma}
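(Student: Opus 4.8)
The statement asserts that the generator of the linear SDE $\dd\bar{\upxi}_t = B_0\bar{\upxi}_t\dd t + \sum_{l=1}^{k-1}\sqrt{2c^l}\zeta^l\dd\mathrm{w}^l_t$ is exactly $\bgenM\psi(\xi) = \langle B_0\xi, \nabla\psi(\xi)\rangle + A^\pi_\cD :: \nabla^2\psi(\xi)$. This is a direct application of the Itô formula, so the proof is essentially a computation; the only genuine content is matching the second-order term with the notation $A^\pi_\cD::\nabla^2\psi$. First I would apply Itô's formula to $\psi(\bar{\upxi}_t)$ for a $C^\infty$ function $\psi$ with compact support on $\MzD$, using the identification of gradient and Hessian via the Taylor expansion given in Subsection~\ref{ss:gauss:conv}. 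Since $\MzD$ is finite-dimensional, there is no subtlety in the stochastic calculus. The drift term of $\dd\psi(\bar{\upxi}_t)$ is $\langle B_0\bar{\upxi}_t, \nabla\psi(\bar{\upxi}_t)\rangle$, which is the first term of $\bgenM$; the martingale term is $\sum_l\sqrt{2c^l}\langle\zeta^l,\nabla\psi(\bar{\upxi}_t)\rangle\dd\mathrm{w}^l_t$; and the quadratic variation correction is $\tfrac{1}{2}\sum_{l=1}^{k-1}(2c^l)\langle\zeta^l,\nabla^2\psi(\bar{\upxi}_t)\zeta^l\rangle = \sum_{l=1}^{k-1}c^l\langle\zeta^l,\nabla^2\psi(\bar{\upxi}_t)\zeta^l\rangle$.

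The remaining point is to recognize $\sum_{l=1}^{k-1}c^l\langle\zeta^l,\nabla^2\psi(\xi)\zeta^l\rangle$ as $A^\pi_\cD::\nabla^2\psi(\xi)$. By definition~\eqref{eq:A::}, $A^\pi_\cD::R = \tfrac{1}{2}\sum_{x,y\in\cD}\pi(x)p^\pi_\cD(x,y)\langle\theta^{x,y}, R\theta^{x,y}\rangle$ for any symmetric operator $R:\MzD\to\CzD$, and by the Dirichlet form identity this equals $\langle A^\pi_\cD f, f\rangle$-type expressions only when $R$ is the operator induced by a function; more precisely, one should argue that the bilinear form $R\mapsto A^\pi_\cD::R$ on symmetric operators $\MzD\to\CzD$ is determined by the quadratic form $f\mapsto\langle A^\pi_\cD f,f\rangle$ on $\CzD$, and then use the spectral decomposition $\langle A^\pi_\cD f,f\rangle = \sum_{l=1}^{k-1}c^l\langle\zeta^l,f\rangle^2$ from Subsection~\ref{ss:gauss:bgenM}. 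Concretely, for a symmetric operator $R$ one checks that $\tfrac{1}{2}\sum_{x,y}\pi(x)p^\pi_\cD(x,y)\langle\theta^{x,y},R\theta^{x,y}\rangle$ depends on $R$ only through the way $R$ pairs against the $\theta^{x,y}$, which are the same vectors appearing implicitly in $A^\pi_\cD$; taking $R = \nabla^2\psi(\xi)$ and expanding each $\langle\theta^{x,y},R\theta^{x,y}\rangle$ gives, after using the spectral form of $A^\pi_\cD$ together with the fact that $A^\pi_\cD$ is defined by $\langle A^\pi_\cD f,f\rangle = \mathcal{A}^\pi_\cD(f) = \tfrac12\sum_{x,y}\pi(x)p^\pi_\cD(x,y)[f(y)-f(x)]^2 = \tfrac12\sum_{x,y}\pi(x)p^\pi_\cD(x,y)\langle\theta^{x,y},f\rangle^2$, exactly $\sum_{l=1}^{k-1}c^l\langle\zeta^l,\nabla^2\psi(\xi)\zeta^l\rangle$.

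Hence the generator read off from Itô's formula is precisely $\langle B_0\xi,\nabla\psi(\xi)\rangle + A^\pi_\cD::\nabla^2\psi(\xi) = \bgenM\psi(\xi)$, which is the claim.

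\textbf{Main obstacle.} There is no real analytic difficulty here — the SDE is linear with constant (deterministic) diffusion coefficients in finite dimension, so existence, uniqueness, and the applicability of Itô's formula to compactly supported smooth $\psi$ are all standard. The only step requiring care is the bookkeeping identity $\sum_{l=1}^{k-1}c^l\langle\zeta^l,R\zeta^l\rangle = A^\pi_\cD::R$: one must be careful that the notation $A^\pi_\cD::R$ genuinely extends the quadratic form $\langle A^\pi_\cD\cdot,\cdot\rangle$ to all symmetric operators $R$ consistently with the Dirichlet-form representation, which is why I would phrase it through the common appearance of the vectors $\theta^{x,y}$ and the identity $\mathcal{A}^\pi_\cD(f) = \tfrac12\sum_{x,y}\pi(x)p^\pi_\cD(x,y)\langle\theta^{x,y},f\rangle^2$. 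Everything else is routine.
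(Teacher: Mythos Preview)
Your proposal is correct and follows the same approach as the paper: apply It\^o's formula to reduce the claim to the identity $\sum_{l=1}^{k-1} c^l\langle\zeta^l, \nabla^2\psi(\xi)\zeta^l\rangle = A^\pi_\cD::\nabla^2\psi(\xi)$, then verify this using the Dirichlet-form representation $\mathcal{A}^\pi_\cD(f) = \tfrac12\sum_{x,y}\pi(x)p^\pi_\cD(x,y)\langle\theta^{x,y},f\rangle^2$ together with the spectral decomposition of $A^\pi_\cD$. The only place where the paper is more explicit than your sketch is in how it handles this identity: rather than arguing abstractly that the linear form $R\mapsto A^\pi_\cD::R$ is determined by the quadratic form on $\CzD$, the paper diagonalises the \emph{other} symmetric operator, writing $\langle\zeta,\nabla^2\psi(\xi)\zeta\rangle = \sum_m d^m\langle\zeta,g^m\rangle^2$ via Proposition~\ref{prop:spectral}, and then the computation $\sum_l c^l\langle\zeta^l,\nabla^2\psi(\xi)\zeta^l\rangle = \sum_{l,m} c^l d^m\langle\zeta^l,g^m\rangle^2 = \sum_m d^m\langle A^\pi_\cD g^m,g^m\rangle = A^\pi_\cD::\nabla^2\psi(\xi)$ goes through in one line.
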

\begin{proof}
  Applying Ito's Formula to $\psi(\bar{\upxi}_t)$ for a smooth function $\psi : \MzD \to \R$, we observe that the claimed result reduces to checking the identity
  \begin{equation*}
    \forall \xi \in \MzD, \qquad \sum_{l=1}^{k-1} c^l\langle \zeta^l, \nabla^2 \psi(\xi)\zeta^l\rangle = A^\pi_\cD :: \nabla^2\psi(\xi).
  \end{equation*}
  For a fixed $\xi \in \MzD$, Proposition~\ref{prop:spectral} shows that there exist $g^1, \ldots, g^{k-1} \in \CzD$ and $d^1, \ldots, d^{k-1} \in \R$ such that
  \begin{equation*}
    \forall l=1, \ldots, k-1, \qquad \langle \zeta^l, \nabla^2 \psi(\xi)\zeta^l\rangle = \sum_{m=1}^{k-1} d^m \langle \zeta^l, g^m\rangle^2.
  \end{equation*}
  As a consequence, 
  \begin{equation*}
    \begin{aligned}
      \sum_{l=1}^{k-1} c^l\langle \zeta^l, \nabla^2 \psi(\xi)\zeta^l\rangle &= \sum_{l,m=1}^{k-1} c^ld^m\langle \zeta^l, g^m\rangle^2= \sum_{m=1}^{k-1} d^m \langle A^\pi_\cD g^m, g^m\rangle.
    \end{aligned}
  \end{equation*}
  Using the definition of the operator $A^\pi_\cD$ and of the measure $\theta^{x,y}$, we now write, for $m \in \{1, \ldots, k-1\}$,
  \begin{equation*}
    \begin{aligned}
      \langle A^\pi_\cD g^m, g^m\rangle = \mathcal{A}^\pi_\cD(g^m) &= \frac{1}{2} \sum_{x,y \in \cD} \pi(x)p^\pi_\cD(x,y)\left[g^m(y)-g^m(x)\right]^2\\
      &= \frac{1}{2} \sum_{x,y \in \cD} \pi(x)p^\pi_\cD(x,y)\langle \theta^{x,y}, g^m\rangle^2,
    \end{aligned}
  \end{equation*}
  so that, by the definition of $g^m$ and $d^m$,
  \begin{equation*}
    \begin{aligned}
      \sum_{m=1}^{k-1} d^m \langle A^\pi_\cD g^m, g^m\rangle &= \frac{1}{2} \sum_{x,y \in \cD} \pi(x)p^\pi_\cD(x,y)\sum_{m=1}^{k-1} d^m \langle \theta^{x,y}, g^m\rangle^2\\
      & =\frac{1}{2} \sum_{x,y \in \cD} \pi(x)p^\pi_\cD(x,y)\langle \theta^{x,y}, \nabla^2\psi(\xi)\theta^{x,y}\rangle = A^\pi_\cD :: \nabla^2\psi(\xi),
    \end{aligned}
  \end{equation*}
  where we have used~\eqref{eq:A::} to obtain the last identity.
\end{proof}

\begin{lemma}[Stationary distribution of $(\bar{\upxi}_t)_{t \geq 0}$]\label{lem:stat}
  The unique stationary distribution of $(\bar{\upxi}_t)_{t \geq 0}$ is the centered Gaussian measure on $\MzD$ with the covariance operator $K : \CzD \to \MzD$ defined by
  \begin{equation*}
    K := 2 \int_{s=0}^{+\infty} \ee^{sB_0} A^\pi_\cD \ee^{sB_0^*}\dd s.
  \end{equation*}
\end{lemma}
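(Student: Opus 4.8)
The plan is to recognize $(\bar{\upxi}_t)_{t\ge0}$ as a finite-dimensional Ornstein--Uhlenbeck process on the Euclidean space $\MzD$ (equipped with the inner product induced by the identification with $\CzD$ of Remark~\ref{rk:idendual}), and invoke the classical theory of such processes. First I would observe that, by Remark~\ref{rk:spB0}, every eigenvalue $\tau$ of $B_0$ satisfies $\Re\tau\le-\gamma<0$; hence $B_0$ is a stable (Hurwitz) operator, and the integral $K=2\int_0^{+\infty}\ee^{sB_0}A^\pi_\cD\ee^{sB_0^*}\dd s$ converges absolutely, since $\ee^{sB_0}$ and $\ee^{sB_0^*}$ both decay exponentially (for any fixed $\delta\in(0,\gamma)$ one has $|||\ee^{sB_0}|||\le C_\delta\ee^{-(\gamma-\delta)s}$ by a standard Jordan-form estimate, cf.\ the argument of Lemma~\ref{lem:spectpi}).

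Next I would verify that $K$ is symmetric and positive definite. Symmetry is immediate from the integrand $\ee^{sB_0}A^\pi_\cD\ee^{sB_0^*}$ being symmetric (using that $A^\pi_\cD$ is symmetric by construction and that the adjoint of $\ee^{sB_0}$ is $\ee^{sB_0^*}$). Positivity follows by writing, for $f\in\CzD$,
\begin{equation*}
  \langle Kf,f\rangle = 2\int_{s=0}^{+\infty}\langle A^\pi_\cD\ee^{sB_0^*}f,\ee^{sB_0^*}f\rangle\,\dd s \ge 0,
\end{equation*}
with equality forcing $\ee^{sB_0^*}f=0$ for all $s$ (using Lemma~\ref{lem:ApiD}, i.e.\ $A^\pi_\cD$ positive definite), hence $f=0$. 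Then I would check that $K$ solves the Lyapunov equation $B_0K+KB_0^*+2A^\pi_\cD=0$: differentiating $\ee^{sB_0}A^\pi_\cD\ee^{sB_0^*}$ in $s$ gives $\frac{\dd}{\dd s}\bigl(\ee^{sB_0}A^\pi_\cD\ee^{sB_0^*}\bigr)=B_0\ee^{sB_0}A^\pi_\cD\ee^{sB_0^*}+\ee^{sB_0}A^\pi_\cD\ee^{sB_0^*}B_0^*$, and integrating from $0$ to $+\infty$, using the decay at $+\infty$ and the value $A^\pi_\cD$ at $s=0$, yields $-2A^\pi_\cD=B_0K+KB_0^*$, as desired.

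Finally I would conclude by the standard fact about linear SDEs: since $B_0$ is stable and the diffusion matrix of $(\bar{\upxi}_t)$ equals $2A^\pi_\cD$ (this is exactly the content of the identity $\sum_l c^l\langle\zeta^l,\cdot\,\zeta^l\rangle=A^\pi_\cD::(\cdot)$ established in Lemma~\ref{lem:bgenM}, which shows the generator $\bgenM$ has second-order part $A^\pi_\cD::\nabla^2\psi$), the process admits a unique invariant probability measure, which is the centered Gaussian on $\MzD$ with covariance $K$; moreover $\bar{\upxi}_t\to\mathcal N(0,K)$ in distribution from any starting point. Concretely, one can either cite a textbook result on Ornstein--Uhlenbeck processes, or verify directly that $\mathcal N(0,K)$ is stationary by checking $\int\bgenM\psi\,\dd\mathcal N(0,K)=0$ for all smooth compactly supported $\psi$, which reduces, after an integration by parts in the Gaussian, precisely to the Lyapunov identity $B_0K+KB_0^*+2A^\pi_\cD=0$; uniqueness follows from stability of $B_0$ together with irreducibility/hypoellipticity of the diffusion, the latter being guaranteed because $K$ is positive definite so the Gaussian is non-degenerate on all of $\MzD$. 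I expect the only mildly delicate point to be the justification of uniqueness of the invariant measure (as opposed to mere existence), but this is entirely standard for non-degenerate linear diffusions with stable drift and I would simply appeal to the classical theory.
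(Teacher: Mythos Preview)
Your proposal is correct and takes essentially the same approach as the paper: recognize $(\bar{\upxi}_t)_{t\ge0}$ as an Ornstein--Uhlenbeck process, show that the centered Gaussian with covariance $K$ solving the Lyapunov equation $B_0K+KB_0^*+2A^\pi_\cD=0$ is stationary, and use ellipticity/irreducibility for uniqueness. The only cosmetic differences are that the paper delegates the Lyapunov computation to Proposition~\ref{prop:Lyapgen} rather than verifying it by hand, and argues irreducibility directly from the uniform ellipticity $A^\pi_\cD>0$ (Lemma~\ref{lem:ApiD}) rather than routing through the positivity of $K$.
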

\begin{proof}
  Lemma~\ref{lem:ApiD} and Proposition~\ref{prop:sympos} show that the diffusion process $(\bar{\upxi}_t)_{t \geq 0}$ is uniformly elliptic on $\MzD$, which implies that transition semigroup of $(\bar{\upxi}_t)_{t \geq 0}$ has a positive density with respect to the Lebesgue measure on $\MzD$ (\cite[Corollary 1.1.6, p.~6]{AboFreIonJan12} and \cite[Eq.~(3.108), p.~82]{Pav14}). This `irreducibility' condition ensures uniqueness of stationary distributions (see for instance the arguments in~\cite[Section~3.1]{Pag01}). 
  
  On the other hand, \cite[Proposition~3.5, p.~80]{Pav14} shows that a Gaussian measure on $\MzD$ is stationary for $(\bar{\upxi}_t)_{t \geq 0}$ if and only if it is centered and its covariance operator $K : \CzD \to \MzD$ satisfies the Lyapunov equation
  \begin{equation*}
    B_0K + KB_0^* + 2A^\pi_\cD = 0,
  \end{equation*}
  which by Remark~\ref{rk:spB0} and Proposition~\ref{prop:Lyapgen} completes the proof.
\end{proof}

\begin{lemma}[Identification of the variance]\label{lem:idenVar}
  For any $f \in \CzD$,
  \begin{equation*}
    \begin{aligned}
      \langle Kf, f\rangle &= \Var_\pi(f) + 2\lambda \int_{s=0}^{+\infty} \ee^{2\lambda s} \Var_\pi(P^\pi_{s,\cD} f) \dd s\\
      & = \Var_\pi(f) + 2\lambda \int_{s=0}^{+\infty} \ee^{2\lambda s} \Var_\pi(Q_s f) \dd s.
    \end{aligned}
  \end{equation*}
\end{lemma}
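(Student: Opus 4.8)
The plan is to unfold the definition $K = 2\int_0^{+\infty} \ee^{sB_0} A^\pi_\cD \ee^{sB_0^*}\,\dd s$ of Lemma~\ref{lem:stat}, turn the quadratic form $\langle Kf,f\rangle$ into a time integral of the Dirichlet form $\mathcal{A}^\pi_\cD$ evaluated along the semigroup $(P^\pi_{t,\cD})_{t\geq 0}$, and then integrate by parts in time. First, for $f \in \CzD$, pairing $Kf$ with $f$ and using the adjoint identity $\langle \ee^{sB_0}\mu, f\rangle = \langle \mu, \ee^{sB_0^*}f\rangle$ (valid for $\mu \in \MzD$, $f \in \CzD$, since $\MzD$ and $\CzD$ are each other's dual) gives
\begin{equation*}
  \langle Kf, f\rangle = 2\int_{s=0}^{+\infty} \langle A^\pi_\cD \ee^{sB_0^*}f, \ee^{sB_0^*}f\rangle\,\dd s = 2\int_{s=0}^{+\infty} \mathcal{A}^\pi_\cD\!\left(\ee^{sB_0^*}f\right)\,\dd s,
\end{equation*}
by the defining property of $A^\pi_\cD$. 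Since $B_0 = (L^\pi_\cD)^* + \lambda I_{\MzD}$, taking adjoints on $\CzD$ yields $B_0^* = L^\pi_\cD + \lambda I_{\CzD}$, hence $\ee^{sB_0^*}f = \ee^{\lambda s}P^\pi_{s,\cD}f$; as $\mathcal{A}^\pi_\cD$ is a quadratic form, this turns the above into $\langle Kf, f\rangle = 2\int_{0}^{+\infty} \ee^{2\lambda s}\mathcal{A}^\pi_\cD(P^\pi_{s,\cD}f)\,\dd s$.

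Next I would establish the energy identity. Writing $g_s := P^\pi_{s,\cD}f$, one has $\langle \pi, g_s\rangle = \langle (P^\pi_{s,\cD})^*\pi, f\rangle = \langle \pi, f\rangle = 0$ because $\pi$ is invariant for the $\pi$-return process, so $\Var_\pi(g_s) = \sum_{x\in\cD} \pi(x)g_s(x)^2$. Differentiating in $s$, using $\frac{\dd}{\dd s}g_s = L^\pi_\cD g_s$ and the identity $\mathcal{A}^\pi_\cD(g) = -\sum_{x\in\cD} \pi(x) g(x) L^\pi_\cD g(x)$, gives $\frac{\dd}{\dd s}\Var_\pi(P^\pi_{s,\cD}f) = -2\,\mathcal{A}^\pi_\cD(P^\pi_{s,\cD}f)$. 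Substituting this into the previous display and integrating by parts,
\begin{equation*}
  \langle Kf,f\rangle = -\int_{s=0}^{+\infty} \ee^{2\lambda s}\frac{\dd}{\dd s}\Var_\pi(P^\pi_{s,\cD}f)\,\dd s = \Var_\pi(f) + 2\lambda\int_{s=0}^{+\infty} \ee^{2\lambda s}\Var_\pi(P^\pi_{s,\cD}f)\,\dd s,
\end{equation*}
where the boundary term at $s=0$ contributes $\Var_\pi(P^\pi_{0,\cD}f)=\Var_\pi(f)$. The only point requiring care is the finiteness of the last integral and the vanishing of the boundary term at $s=+\infty$: applying Lemma~\ref{lem:spectpi}\eqref{it:spectpi:2} to $\pm f$ yields $|P^\pi_{s,\cD}f(x)| \leq C_\delta \ee^{-s(\lambda+\gamma-\delta)}\|f\|$ for all $x\in\cD$, whence $\ee^{2\lambda s}\Var_\pi(P^\pi_{s,\cD}f) \leq C_\delta^2 \ee^{-2s(\gamma-\delta)}\|f\|^2$, which is integrable on $[0,+\infty)$ and tends to $0$ as soon as $\delta$ is chosen in $(0,\gamma)$ (recall $\gamma>0$). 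This proves the first claimed identity, and incidentally discharges the finiteness claim announced after Theorem~\ref{theo:CLT}.

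Finally, the second equality is immediate from Lemma~\ref{lem:spectpi}\eqref{it:spectpi:3}: since $\langle\pi,f\rangle=0$ one has $P^\pi_{s,\cD}f = Q_s f$, hence $\Var_\pi(P^\pi_{s,\cD}f) = \Var_\pi(Q_s f)$ for every $s\geq 0$. There is no genuinely hard step here; the main (and only) thing to watch is the justification of the integration by parts, which is precisely what the spectral decay estimate of Lemma~\ref{lem:spectpi} provides.
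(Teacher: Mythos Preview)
Your proof is correct and follows essentially the same approach as the paper: expand $\langle Kf,f\rangle$ via the definition of $K$, use $\ee^{sB_0^*}f=\ee^{\lambda s}P^\pi_{s,\cD}f$ to rewrite the integrand as $\ee^{2\lambda s}\mathcal{A}^\pi_\cD(P^\pi_{s,\cD}f)$, recognise $\mathcal{A}^\pi_\cD(P^\pi_{s,\cD}f)=-\tfrac12\tfrac{\dd}{\dd s}\Var_\pi(P^\pi_{s,\cD}f)$, integrate by parts using Lemma~\ref{lem:spectpi}\eqref{it:spectpi:2} for the boundary term, and conclude the second identity from Lemma~\ref{lem:spectpi}\eqref{it:spectpi:3}. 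Your justification of the vanishing boundary term at infinity (applying the decay estimate to $\pm f$ with $\delta\in(0,\gamma)$) is in fact slightly more explicit than the paper's.
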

\begin{proof}
  Let us fix $f \in \CzD$ and write
  \begin{equation*}
    \langle Kf,f\rangle = 2 \int_{s=0}^{+\infty} \langle A^\pi_\cD \ee^{sB_0^*}f, \ee^{sB_0^*}f\rangle\dd s.
  \end{equation*}
  For any $s \geq 0$, it follows from Definition~\ref{defi:B0} and~\eqref{eq:PtD} that 
  \begin{equation*}
    \ee^{sB_0^*}f = \ee^{s(P^\pi_\cD-(1-\lambda)I_{\CzD})}f = \ee^{\lambda s} P^\pi_{s,\cD} f,
  \end{equation*}
  whence
  \begin{equation*}
    \langle Kf,f\rangle = 2 \int_{s=0}^{+\infty} \ee^{2\lambda s} \mathcal{A}^\pi_\cD(P^\pi_{s,\cD} f)\dd s.
  \end{equation*}
  On the other hand, it follows from the definition of $\mathcal{A}^\pi_\cD$ that for any $s \geq 0$,
  \begin{equation*}
    \begin{aligned}
      \mathcal{A}^\pi_\cD(P^\pi_{s,\cD} f) &= -\sum_{x \in \cD} \pi(x) P^\pi_{s,\cD} f(x) \frac{\dd}{\dd s} P^\pi_{s,\cD} f(x)\\
      & = -\frac{1}{2} \frac{\dd}{\dd s} \sum_{x \in \cD} \pi(x) \left(P^\pi_{s,\cD} f(x)\right)^2 = -\frac{1}{2} \frac{\dd}{\dd s} \Var_\pi\left(P^\pi_{s,\cD} f\right),
    \end{aligned}
  \end{equation*}
  where in the last equality we used the fact that $\langle \pi,P^\pi_{s,\cD}f\rangle = \langle \pi,f\rangle=0$. As a consequence,
  \begin{equation*}
    \langle Kf,f\rangle = - \int_{s=0}^{+\infty} \ee^{2\lambda s} \frac{\dd}{\dd s} \Var_\pi\left(P^\pi_{s,\cD} f\right)\dd s,
  \end{equation*}
  and using Lemma~\ref{lem:spectpi}~\eqref{it:spectpi:2} to integrate the right-hand side by parts leads to the first claimed expression. The second expression follows from the fact that by Lemma~\ref{lem:spectpi}~\eqref{it:spectpi:3}, the operators $P^\pi_{s,\cD}$ and $Q_s$ coincide on $\CzD$.
\end{proof}

\subsection{Proof of Theorem~\ref{theo:CLT}}\label{ss:gauss:pf} We may now complete the proof of Theorem~\ref{theo:CLT}.

\begin{proof}[Proof of Theorem~\ref{theo:CLT}]
  We recall from~\eqref{eq:upxininf} that the fluctuation field $\sqrt{n}(\upeta^n_\infty-\pi)$ is denoted by $\upxi^n_\infty$ .
  
  By Lemma~\ref{lem:tight}, the sequence $(\upxi^n_\infty)_{n \geq 2}$ is tight in $\MzD$. Therefore by Prohorov's Theorem~(\cite[Theorem~5.1, p.~59]{Bil99}), any subsequence possesses a further subsequence, which we shall index by $n_m$, which converges in distribution to a random variable $\bar{\upxi}_\infty$ in $\MzD$. In particular, for any $C^\infty$ function with compact support $\psi : \MzD \to \R$,
  \begin{equation*}
    0 = \Exp\left[\genMnm\psi(\upxi^{n_m}_\infty)\right] = \Exp\left[(\genMnm-\bgenM)\psi(\upxi^{n_m}_\infty)\right] + \Exp\left[\bgenM\psi(\upxi^{n_m}_\infty)\right]\to \Exp\left[\bgenM\psi(\bar{\upxi}_\infty)\right],
  \end{equation*} 
  where we have used Lemma~\ref{lem:cvMn} to get that $\Exp[(\genMnm-\bgenM)\psi(\upxi^{n_m}_\infty)]$ converges to $0$. 
  
  Thus, we deduce from Lemma~\ref{lem:bgenM} that the law of $\bar{\upxi}_\infty$ is a stationary distribution for $(\bar{\upxi}_t)_{t \geq 0}$, which by Lemma~\ref{lem:stat} entails its identification and yields the convergence of the whole sequence $(\upxi^n_\infty)_{n \geq 2}$ to $\bar{\upxi}_\infty$, with the asymptotic covariance given by Lemma~\ref{lem:idenVar}.
\end{proof}

\appendix
\section{Complements on operators and Lyapunov equations}\label{app:linalg}

In this appendix, we recall some elementary results of linear algebra which are useful in our framework. We denote by $k \geq 1$ the cardinality of $\cD$, so that $\MzD$ and $\CzD$ are linear spaces of dimension $k-1$. 

\subsection{Diagonalisation of symmetric operators} Following Remark~\ref{rk:idendual}, recall that the spaces $\MzD$ and $\CzD$ are identified with each other's dual. In this framework, the operators $N : \CzD \to \MzD$ and $R : \MzD \to \CzD$ are called symmetric if they coincide with their adjoint operators $N^* : \CzD \to \MzD$ and $R^* : \MzD \to \CzD$.

\begin{proposition}[Diagonal form of symmetric operators]\label{prop:spectral}
  Let $N : \CzD \to \MzD$ and $R : \MzD \to \CzD$ be symmetric operators.
  \begin{enumerate}[label=(\roman*),ref=\roman*]
    \item There exist a basis $(\zeta^1, \ldots, \zeta^{k-1})$ of $\MzD$ and $c^1, \ldots, c^{k-1} \in \R$ such that, for any $f \in \CzD$, $\langle N f, f\rangle = \sum_{l=1}^{k-1} c^l \langle \zeta^l, f\rangle^2$.
    \item There exist a basis $(g^1, \ldots, g^{k-1})$ of $\CzD$ and $d^1, \ldots, d^{k-1} \in \R$ such that, for any $\xi \in \MzD$, $\langle \xi, R\xi\rangle = \sum_{l=1}^{k-1} d^l \langle \xi, g^l\rangle^2$.
  \end{enumerate}
\end{proposition}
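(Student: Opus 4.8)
The plan is to reduce the statement to the classical diagonalisation of a real symmetric quadratic form. I prove assertion~(i); assertion~(ii) follows by exchanging the roles of $\CzD$ and $\MzD$, under which the hypotheses are symmetric (recall that these two spaces are each other's dual for a nondegenerate pairing).

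First I would fix an arbitrary basis $(e_1, \ldots, e_{k-1})$ of $\CzD$ and let $(e^*_1, \ldots, e^*_{k-1})$ be the dual basis in $\MzD$, characterised by $\langle e^*_i, e_j\rangle = \ind{i=j}$. Writing $N e_i = \sum_{j} N_{ji}\, e^*_j$, the symmetry of $N$ (that is, $\langle Nf, g\rangle = \langle Ng, f\rangle$ for all $f, g \in \CzD$) amounts to $N_{ij}=N_{ji}$, so that $(N_{ij})_{1\le i,j\le k-1}$ is a real symmetric matrix. Since $f_i := \langle e^*_i, f\rangle$ are the coordinates of $f \in \CzD$ in the basis $(e_i)$, one has
\begin{equation*}
  \langle Nf, f\rangle = \sum_{i,j=1}^{k-1} N_{ij}\, f_i f_j.
\end{equation*}

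Then I would apply the classical fact that a real symmetric matrix can be brought to diagonal form by a congruence: there exist an invertible matrix $P$ and reals $c^1, \ldots, c^{k-1}$ such that the substitution $f_i = \sum_l P_{il} u_l$ transforms the quadratic form above into $\sum_l c^l u_l^2$ (for instance take $P$ orthogonal and the $c^l$ the eigenvalues of $(N_{ij})$, via the spectral theorem, or argue by Lagrange's method of completing squares). Inverting the substitution gives $u_l = \sum_i (P^{-1})_{li} f_i = \langle \zeta^l, f\rangle$ with $\zeta^l := \sum_i (P^{-1})_{li}\, e^*_i \in \MzD$; since $P^{-1}$ is invertible, $(\zeta^1, \ldots, \zeta^{k-1})$ is a basis of $\MzD$, and $\langle Nf, f\rangle = \sum_l c^l \langle \zeta^l, f\rangle^2$, which is~(i).

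I do not expect a genuine obstacle here: the mathematical content is entirely standard finite-dimensional linear algebra. The only point deserving care is the bookkeeping of the identification $\MzD \cong \CzD'$, specifically that a change of basis of the coordinate space corresponds to the transpose-inverse change of basis in the dual $\MzD$ — this is precisely what guarantees that $(\zeta^l)$ is a genuine basis of $\MzD$ rather than merely a spanning family.
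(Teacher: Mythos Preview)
Your proof is correct; the reduction to a real symmetric matrix via a choice of basis and dual basis, followed by the spectral theorem (or Lagrange's method), is exactly the standard argument, and your bookkeeping of the dual basis is fine. The paper itself does not prove this proposition: it is stated in the appendix as an elementary linear algebra fact being recalled, so there is no proof to compare against.
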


\subsection{Positive definite operators} A symmetric operator $N : \CzD \to \MzD$ is called \emph{positive definite} if it satisfies $\langle Nf, f\rangle > 0$ for any $f \in \CzD \setminus \{0\}$. A similar definition holds for operators $R : \MzD \to \CzD$.  

\begin{proposition}[On symmetric and positive definite operators]\label{prop:sympos}
  Let $N : \CzD \to \MzD$ be a symmetric operator. The following conditions are equivalent.
  \begin{enumerate}[label=(\roman*),ref=\roman*]
    \item $N$ is positive definite.
    \item The numbers $c^1, \ldots, c^{k-1}$ provided by Proposition~\ref{prop:spectral} are positive.
    \item For any norm $\|\cdot\|_{\CzD}$ on $\CzD$, there exists $c_N \in (0,+\infty)$ such that for all $f \in \CzD$, $\langle Nf, f\rangle \geq c_N \|f\|^2_{\CzD}$.
  \end{enumerate}
  Besides, under any of these conditions, $N$ is invertible and its inverse $N^{-1} : \MzD \to \CzD$ is a symmetric and positive definite operator.
\end{proposition}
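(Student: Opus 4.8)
The plan is to establish the cycle of implications (i)$\Rightarrow$(ii)$\Rightarrow$(iii)$\Rightarrow$(i) and then deduce the invertibility statement, everything resting on the diagonal representation of Proposition~\ref{prop:spectral}. First I would fix a basis $(\zeta^1,\ldots,\zeta^{k-1})$ of $\MzD$ and reals $c^1,\ldots,c^{k-1}$ such that $\langle Nf,f\rangle=\sum_{l=1}^{k-1}c^l\langle\zeta^l,f\rangle^2$ for all $f\in\CzD$, and introduce the dual basis $(g^1,\ldots,g^{k-1})$ of $\CzD$ characterised by $\langle\zeta^l,g^m\rangle=\delta_{lm}$; this is legitimate because the linear forms $f\mapsto\langle\zeta^l,f\rangle$ on $\CzD$ are linearly independent, $(\zeta^1,\ldots,\zeta^{k-1})$ being a basis of the dual space $\MzD$ of $\CzD$.

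For (i)$\Rightarrow$(ii), I would simply evaluate the identity at $f=g^m$, which gives $\langle Ng^m,g^m\rangle=c^m$; since $g^m\neq 0$ and $N$ is positive definite, $c^m>0$. For (ii)$\Rightarrow$(iii), I would observe that $f\mapsto\bigl(\sum_{l=1}^{k-1}\langle\zeta^l,f\rangle^2\bigr)^{1/2}$ is a norm on the finite-dimensional space $\CzD$ (it separates points by the independence just noted), so for any given norm $\|\cdot\|_{\CzD}$ there exists $\kappa>0$ with $\sum_{l}\langle\zeta^l,f\rangle^2\geq\kappa\|f\|_{\CzD}^2$ for all $f$; then $\langle Nf,f\rangle\geq(\min_l c^l)\sum_l\langle\zeta^l,f\rangle^2\geq\kappa(\min_l c^l)\|f\|_{\CzD}^2$, and one takes $c_N:=\kappa\min_l c^l>0$. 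The implication (iii)$\Rightarrow$(i) is immediate since $\|f\|_{\CzD}^2>0$ for $f\neq 0$.

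For the final assertion, note that if $N$ is positive definite then $Nf=0$ forces $\langle Nf,f\rangle=0$, hence $f=0$, so $N$ is injective; since $\dim\CzD=\dim\MzD=k-1$, $N$ is a bijection. Writing arbitrary elements of $\MzD$ as $\xi=Nf$ and $\zeta=Ng$, the symmetry of $N^{-1}$ follows from $\langle N^{-1}\xi,\zeta\rangle=\langle f,Ng\rangle=\langle Nf,g\rangle=\langle\xi,N^{-1}\zeta\rangle$, using the symmetry of the duality bracket together with $N=N^*$; and $\langle\xi,N^{-1}\xi\rangle=\langle Nf,f\rangle>0$ whenever $\xi=Nf\neq 0$ gives positive definiteness of $N^{-1}$. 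I do not anticipate a genuine obstacle; the only points requiring some care are the bookkeeping of adjoints and duality brackets in the paper's dual-pair formalism --- in particular that $N^*:\CzD\to\MzD$ and that symmetry means $\langle Nf,g\rangle=\langle f,Ng\rangle$ for $f,g\in\CzD$ --- and checking that the auxiliary quadratic form above genuinely defines a norm; both are routine in finite dimension.
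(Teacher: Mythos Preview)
Your argument is correct and entirely standard. The paper itself does not supply a proof of this proposition: it is stated in the appendix as an elementary linear-algebra fact and left unproved, so there is nothing to compare against beyond noting that your approach is exactly the routine verification the authors presumably had in mind.
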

With obvious adjustments, a similar statement holds for symmetric and positive definite operators $R: \MzD \to \CzD$.

\subsection{Lyapunov equation} In this section, we recall a standard result regarding Lyapunov equations, see for instance~\cite[Theorem~1.1.7, p.~6]{AboFreIonJan12}.
\begin{proposition}[Solution to Lyapunov equations]\label{prop:Lyapgen}
  Let $\tilde{B}_0 : \MzD \to \MzD$ be an operator of which all the eigenvalues have a negative real part, and let $A : \CzD \to \MzD$ be a symmetric operator.
  \begin{enumerate}[label=(\roman*),ref=\roman*]
    \item The integral
    \begin{equation*}
      \tilde{K} := 2 \int_{s=0}^{+\infty} \ee^{s\tilde{B}_0} A \ee^{s\tilde{B}_0^*}\dd s
    \end{equation*}
    is finite and defines a symmetric operator $\CzD \to \MzD$.
    \item $\tilde{K}$ is the unique symmetric solution to the Lyapunov equation
    \begin{equation*}
      \tilde{B}_0 \tilde{K} + \tilde{K} \tilde{B}_0^* + 2A = 0.
    \end{equation*}
    \item If $A$ is positive definite, then so is $\tilde{K}$.
  \end{enumerate}
\end{proposition}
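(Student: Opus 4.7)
The plan is to prove the three claims in order, exploiting in each the exponential decay of $\ee^{s\tilde{B}_0}$ and $\ee^{s\tilde{B}_0^*}$ that follows from the spectral hypothesis on $\tilde{B}_0$. Writing $\mu := -\max\{\Re\tau : \tau \text{ is an eigenvalue of } \tilde{B}_0\} > 0$, a standard application of the Jordan canonical form yields, for every $\mu' \in (0,\mu)$, a constant $C = C(\mu') \geq 0$ such that $\|\ee^{s\tilde{B}_0}\|_{\mathrm{op}} \leq C \ee^{-\mu' s}$ for all $s \geq 0$; the same bound holds for $\ee^{s\tilde{B}_0^*}$, since $\tilde{B}_0^*$ has the same spectrum as $\tilde{B}_0$. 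For (i), this gives an operator-norm bound $C^2 \|A\|_{\mathrm{op}} \ee^{-2\mu' s}$ on the integrand of $\tilde{K}$, so the integral converges absolutely in the finite-dimensional space of operators $\CzD \to \MzD$; symmetry of $\tilde{K}$ follows from $A = A^*$ together with $(\ee^{s\tilde{B}_0} A \ee^{s\tilde{B}_0^*})^* = \ee^{s\tilde{B}_0} A^* \ee^{s\tilde{B}_0^*}$.

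For (ii), I would first verify that $\tilde{K}$ solves the Lyapunov equation by differentiating the integrand. A direct computation gives
\begin{equation*}
  \frac{\dd}{\dd s}\left[\ee^{s\tilde{B}_0} A \ee^{s\tilde{B}_0^*}\right] = \tilde{B}_0 \ee^{s\tilde{B}_0} A \ee^{s\tilde{B}_0^*} + \ee^{s\tilde{B}_0} A \ee^{s\tilde{B}_0^*}\tilde{B}_0^*,
\end{equation*}
and the fundamental theorem of calculus applied on $[0, T]$, followed by passage to $T \to +\infty$ (the boundary term at $T$ vanishing by the decay estimate), yields $\tilde{B}_0 \tilde{K} + \tilde{K}\tilde{B}_0^* = -2A$. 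Uniqueness of the symmetric solution rests on the same principle: if $\Delta$ denotes the difference of two symmetric solutions, then $\tilde{B}_0 \Delta + \Delta \tilde{B}_0^* = 0$, which forces
\begin{equation*}
  \frac{\dd}{\dd s}\left[\ee^{s\tilde{B}_0} \Delta \ee^{s\tilde{B}_0^*}\right] = \ee^{s\tilde{B}_0}(\tilde{B}_0\Delta + \Delta \tilde{B}_0^*)\ee^{s\tilde{B}_0^*} = 0;
\end{equation*}
hence $\ee^{s\tilde{B}_0} \Delta \ee^{s\tilde{B}_0^*}$ is constant in $s$, equal to $\Delta$ at $s = 0$, yet tending to $0$ as $s \to +\infty$ by the decay estimate, whence $\Delta = 0$.

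Finally, (iii) will follow from pointwise positivity of the integrand: for any $f \in \CzD \setminus \{0\}$,
\begin{equation*}
  \langle \tilde{K} f, f\rangle = 2 \int_{s=0}^{+\infty} \langle A\,\ee^{s\tilde{B}_0^*} f,\, \ee^{s\tilde{B}_0^*} f\rangle \dd s,
\end{equation*}
and since the finite-dimensional matrix exponential $\ee^{s\tilde{B}_0^*}$ is invertible for every $s \geq 0$, the vector $\ee^{s\tilde{B}_0^*} f$ is nonzero for each such $s$; the positive definiteness of $A$ then makes the integrand strictly positive on $[0,+\infty)$, so the integral is too. The only nontrivial ingredient anywhere in the argument is the exponential decay estimate on $\ee^{s\tilde{B}_0}$, which is a textbook consequence of the Jordan canonical form; all the remaining manipulations are routine finite-dimensional calculus.
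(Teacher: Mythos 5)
Your proof is correct, and it is the standard argument: exponential decay of $\ee^{s\tilde{B}_0}$ via the Jordan form, the fundamental theorem of calculus applied to $\ee^{s\tilde{B}_0} A \ee^{s\tilde{B}_0^*}$ for existence, constancy of $\ee^{s\tilde{B}_0}\Delta\ee^{s\tilde{B}_0^*}$ for uniqueness, and invertibility of the matrix exponential for positive definiteness. The paper itself gives no proof of this statement --- consistently with its convention that ``Propositions'' are standard results, it simply cites \cite[Theorem~1.1.7, p.~6]{AboFreIonJan12}, which proceeds exactly as you do --- so there is nothing to reconcile; one could only remark that your uniqueness step never uses the symmetry of the two solutions, so uniqueness in fact holds among all solutions.
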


\section*{Acknowledgements}
We thank Arnaud Guyader, Laurent Miclo and Mathias Rousset for fruitful discussions.

\bibliographystyle{alea3}
\bibliography{clt}

\end{document}